\numberwithin{equation}{section}
\newcounter{todocounter}
\newtheorem{theorem}{Theorem}
\newtheorem{lemma}[theorem]{Lemma}
\newtheorem{remark}[theorem]{Remark}
\definecolor{darkgreen}{rgb}{0.0, 0.4, 0.0}
\title{
	\bf{An all speed second order IMEX relaxation scheme for the Euler equations}
}
\author{
	Andrea Thomann\footnote{Dipartimento di Scienze e Alta Tecnologia, Universit\`a degli Studi dell'Insubria, Via Valleggio 11, 22100 Como, Italy} \footnote{Correspondence to: Andrea Thomann, University of Insubria, Via Valleggio 11, 22100 Como (CO), Italy, Email: acthomann@uninsubria.it} \footnote{Marie Sk\l odowska-Curie fellow of the Istituto Nazionale di Alta Matematica Francesco Severi, Rome, Italy},
Markus Zenk\footnote{Fakult\"at f\"ur Mathematik und Informatik, Universit\"at W\"urzburg, Emil-Fischer-Str. 40, 97074 W\"urzburg, Germany},
Gabriella Puppo\footnote{Dipartimento di Matematica, La Sapienza Universit\`a di Roma, Piazzale Aldo Moro 5, 00185 Roma, Italy},\\ Christian Klingenberg\footnotemark[4]
}
\date{\today}
\begin{document}
\maketitle

\section*{Abstract}
We present an implicit-explicit finite volume scheme for the Euler 
equations. 
We start from the non-dimensionalised Euler equations where we split the pressure in a slow and a fast acoustic part. 
We use a Suliciu type relaxation model which we split in 
an explicit part, solved using a Godunov-type scheme based on an approximate 
Riemann solver, and an implicit part where we solve an elliptic equation 
for the fast pressure. 
The relaxation source terms are treated projecting the solution on the equilibrium manifold. 
The proposed scheme is positivity preserving with respect to the density and internal energy and asymptotic preserving towards the incompressible Euler equations. 
For this first order scheme we give a second order extension which maintains the positivity property. 
We perform numerical experiments in 1D and 2D to show the applicability of the proposed splitting and give convergence results for the second order extension.

\paragraph{Keywords} finite volume methods, Euler equations, positivity preserving, asymptotic preserving, relaxation, low Mach scheme, IMEX schemes

\section{Introduction}
We consider the non-dimensional Euler equations in $d$-space dimensions which are given by the following set of equations \cite{Dellacherie2010}
\begin{align}
\begin{split}
\rho_t + \nabla \cdot (\rho \mathbf{u}) &= 0 \\
(\rho u)_t + \nabla \cdot (\rho \mathbf{u} \otimes \mathbf{u} + \frac{1}{M^2} p \mathbb{I}) & = 0 \\
E_t + \nabla \cdot(\mathbf{u}(E + p)) &= 0, \\
\end{split}
\label{sys:nondimEuler}
\end{align}
where the total energy is given by 
\begin{equation}
E = \rho e + \frac{1}{2} M^2 \rho |u|^2
\label{eq:Etot}
\end{equation}
and $e > 0$ denotes the internal energy. 
The density is denoted by $\rho > 0$, $\mathbf{u} \in \mathbb{R}^d$ is the velocity vector and $M$ is a given Mach number which controls the ratio between the velocity of the gas and the sound speed.
Depending on the magnitude of the Mach number the characteristic nature of the flow changes. 
This makes the numerical simulation of these flows very challenging but also a very interesting research subject with a wide range of applications, for example in astrophysical stellar evolution or multiphase flows \cite{MiczekRoepkeEdelmann2015,AbbateIolloPuppo2019}.
For large Mach numbers, the flow is governed by compressible effects whereas in the low Mach limit the compressible equations converge towards the incompressible regime.
This behaviour was studied for example in \cite{klainermanMajda1982,Dellacherie2010,Schochet2005}.
We refer to \cite{FeireislKlingenbergMarkfelder2019} for a study of the full Euler equations.

Standard schemes designed for compressible flows like the Roe scheme \cite{Roe1981} or Godunov type schemes fail due to exessive diffusion when applied in the low Mach regime. 
A lot of work is dedicated to cure this defect, see for instance \cite{GuillardViozat1999,Turkel1987,Dellacherie2010,MiczekRoepkeEdelmann2015}.
Another way to ensure accurate solutions in the low Mach regime is the development of asymptotic preserving schemes which are consistent with its limit behaviour as $M$ tends to zero, see for example \cite{CordierDegondKumbaro2012,DimarcoLoubereVignal2017,BoscarinoRussoScandurra2018} and references therein.

Due to the hyperbolic nature of \eqref{sys:nondimEuler} the time step for an explicit scheme is restricted by a stability CFL condition that depends on the inverse of the fastest wave speed. 
In the case of \eqref{sys:nondimEuler} the acoustic wave speeds tend to infinity as $M$ tends to zero which leads to very small time steps to guarantee the stability of the explicit scheme. 
As a side effect all waves will be resolved by the explicit scheme although the fast waves are not necessary to capture the motion of the fluid as they carry a negligible amount of energy. 
Implicit methods on the other hand allow larger time steps but introduce diffusion on the slow wave which leads to a loss of accuracy. 
In addition at each iteration a non-linear often ill conditioned algebraic system has to be solved.  
Implicit-explicit (IMEX) methods try to overcome those disadvantages by treating the stiff parts implicitly and thus allow for a Mach number independent time step.
Many of those schemes are based on a splitting of the pressure in the spirit of Klein \cite{Klein1995} since the stiffness of the system is closely related with the pressure, see for example \cite{CordierDegondKumbaro2012,NoelleEtAl2014,Klein1995}.

Another way to avoid solving non-linear implicit systems is using a relaxation approach. 
See \cite{JinXin1995,suliciu1990,ChenLevermoreLiu1994,Liu1987,Natalini1996} for references on relaxation.
The idea of using relaxation is to linearise the equations which makes it easier to solve them implicitly as done in \cite{AbbateIolloPuppo2017} through Jin Xin relaxation \cite{JinXin1995}.
The linear degenerate structure of the resulting relaxation models allows to use accurate Godunov-type finite volume methods due to the knowledge about the Riemann solution also with implicit schemes as done in \cite{BerthonKlingenbergZenk2018}.
Here, we want to combine a Suliciu type relaxation with an IMEX scheme and the splitting of the pressure. 
The relaxation model we use is based on relaxing the slow and the fast pressure separately. 
The flux is then split such that only some relaxation variables are treated implicitly which leads to solving only one equation implicitly.
This results in a computationally cheap scheme.
We do not split inside the flux of the Euler equations but treat all terms explicitly.
This leads to a conservative explicit part for which we use a Godunov-type approximate Riemann solver.
The use of a Riemann solver allows us to prove with little effort the positivity preserving property of the scheme which is important in physical applications. 
In addition the way the flux is splitted leads to an asymptotic preserving scheme.

To have a relevant scheme for applications an extension to second or higher order is necessary. 
Higher order schemes in time can be achieved by using IMEX Runge Kutta (RK) methods as in \cite{BoscarinoRussoScandurra2018,DimarcoEtAl2018}. 
As standard in finite volume schemes higher order in space is achieved by reconstructing the cell interface values using WENO schemes for example \cite{Shu1998,CraveroPuppoSempliceVisconti2018}.
We use a MUSCL approach \cite{Berthon2005} to achieve a second order extension to the first order scheme which preserves the positivity of density and internal energy. 

The paper is organized as follows. 
In the next section we describe the relaxation model that is used to derive the IMEX scheme. 
Section \ref{sec:time-semi} is dedicated to the splitting of the flux into implicit and explicit terms and the structure of the first order time semi-discrete scheme.
The asymptotic preserving property is proved in Section \ref{sec:AP}. 
Next, we give the derivation of the fully discrete scheme in Section \ref{sec:fully_discrete}.
Therein the Godunov type scheme for the explicit part is given as well as the proof of the positivity of the resulting numerical scheme. 
In addition, we show that the diffusion introduced by the Riemann solver is independent of the Mach number due to the splitting described in Section \ref{sec:time-semi}.
In Section \ref{sec:Second} we give a second order extension for the first order scheme for which we show that it preserves the positivity property. 
It is followed by a section of numerical results to validate the theoretical results. 
A section of conclusion completes this paper.

\section{Suliciu Relaxation model}
To simplify the non-linear structure of the Euler equations \eqref{sys:nondimEuler} we make use of the Suliciu relaxation approach \cite{suliciu1990,Bouchut2004,CoquelPerthame1998} and references therein.
Compared to the Jin Xin relaxation \cite{JinXin1995} the original system of equations remains unchanged. 
Whereas in the Jin Xin relaxation approach the linearisation of the equations is achieved by relaxing every component of the flux function, in the Suliciu type relaxation single variables are relaxed in a fashion that is tailored to the problem.
This leads to a reduced diffusion introduced by the relaxation process compared to the Jin Xin relaxation.  

Following the usual Suliciu relaxation procedure, the key element is the relaxation of the pressure by introducing a new variable $\pi$,   
\begin{equation}
	(\rho \pi)_t + \nabla \cdot (\rho \pi \mathbf{u}) + a^2 \nabla \cdot \mathbf{u} = \frac{\rho}{\epsilon} (p - \pi).
	\label{eq:SuliciuPi}
\end{equation}
Here $\varepsilon$ denotes the relaxation time. 
Equation \eqref{eq:SuliciuPi} is derived by multiplying the density equation by $\partial_\rho p$. The non-linearity $\rho^2 \partial_\rho p(\rho,e)$ that thereby arises is replaced by a constant parameter $a^2$, in the following called the relaxation parameter.
To guarantee a stable diffusive approximation of the original Euler equations the relaxation parameter must meet the sub-characteristic condition $a > \rho \sqrt{\partial_\rho p(\rho,e)} > 0$ \cite{ChenLevermoreLiu1994}.
We want to profit from the properties of the Suliciu relaxation also for the non-dimensional Euler equations. 
In the following, we describe the relaxation model that was first introduced in \cite{BerthonKlingenbergZenk2018}.
Following \cite{KleinEtAl2001}, in a first step the pressure is decomposed into a slow dynamics and a fast acoustic component
\begin{equation*}
\frac{p}{M^2} = p + \frac{1-M^2}{M^2}p.
\end{equation*}
Approximating the slow and the fast pressure in the momentum equation in \eqref{sys:nondimEuler} by the variables $\pi$ and $\psi$ respectively,  momentum equation becomes
\begin{equation*}
	(\rho \mathbf{u})_t + \nabla \cdot \left(\rho \mathbf{u}\otimes\mathbf{u} + \pi + \frac{1-M^2}{M^2} \psi\right) = 0
\end{equation*}
The evolution of the new variables $\pi$ and $\psi$ are then developed in the spirit of a Suliciu relaxation approach.
The evolution for $\pi$ is given by the Suliciu relaxation equation \eqref{eq:SuliciuPi}.
However, applying the standard Suliciu relaxation method also on the pressure $\psi$, would only lead to non relevant diffusion terms and not to a low Mach scheme.
To overcome this, the authors of \cite{BerthonKlingenbergZenk2018} introduced a new velocity variable $\mathbf{\hat{u}} \in \mathbb{R}^{d}$ which is relaxed to the system velocity $\mathbf{u}$ and couples to the pressure $\psi$.
The form of the evolution equations for $\mathbf{\hat{u}}$ and $\psi$ is chosen, such that 
\begin{itemize}
	\item the resulting model is in conservation form
	\item the resulting model has ordered eigenvalues, which results in a clear wave structure
	\item the resulting model is a stable diffusive approximation of the non-dimensional Euler equations \eqref{sys:nondimEuler}
	\item the resulting numerical scheme has a Mach number independent diffusion.
\end{itemize}
Considering the above points, this leads to the following relaxation model 
\begin{align}
\begin{split}
\rho_t + \nabla \cdot (\rho u) &= 0,\\
(\rho \mathbf{u})_t + \nabla \cdot \left(\rho \mathbf{u}\otimes\mathbf{u} + \pi + \frac{1-M^2}{M^2} \psi\right) &= 0,\\
E_t + \nabla \cdot ( \mathbf{u} (E + M^2 \pi + (1 - M^2) \psi)) &= 0, \\
(\rho \pi)_t + \nabla \cdot (\rho \mathbf{u} \pi + a^2 \mathbf{u}) &= \frac{\rho}{\varepsilon}(p - \pi), \\
(\rho \mathbf{\hat{u}})_t + \nabla \cdot (\rho \mathbf{u} \otimes \mathbf{\hat{u}} + \frac{1}{M^2} \psi) &= \frac{\rho}{\varepsilon} (\mathbf{u} - \mathbf{\hat{u}}), \\
(\rho \psi)_t + \nabla \cdot (\rho \mathbf{u} \psi + a^2 \mathbf{\hat{u}}) &= \frac{\rho}{\varepsilon}(p - \psi).
\end{split}
\label{sys:full_relaxation_Euler_LM}
\end{align}
	The relaxation model given in \eqref{sys:full_relaxation_Euler_LM} differs from the one given in \cite{BerthonKlingenbergZenk2018} in the following points:
	\begin{enumerate}
	\item In the equation for $\mathbf{\hat{u}}$, we use $\frac{1}{M^2}$ instead of $\frac{1}{M^4}$ as proposed by the authors in \cite{BerthonKlingenbergZenk2018}.
	This is due to the upwind discretization used in \cite{BerthonKlingenbergZenk2018} which requires $\frac{1}{M^4}$ in order to have a Mach number independent diffusion of the numerical scheme. 
	Here instead we use centered differences in the implicit part to ensure the Mach number independent diffusion of the numerical scheme.
	\item We have simplified the model in the sense that we do not distinguish between the given Mach number $M$ and the local Mach number $M_{loc}$. 
	This is not a restriction in application, because the choice of $M$ is given by the application as illustrated in the numerical results.
	Especially for the Mach number dependent shock test case in Section 7.1.2 we directly compare our results to a scheme that uses the local Mach number $M_{loc}$.
	\end{enumerate}
The following lemma sums up some properties of system \eqref{sys:full_relaxation_Euler_LM}. 
The proof can be found in \cite{BerthonKlingenbergZenk2018}.
\begin{lemma}
	\label{lem:relax_sys_prob_orig}
	The relaxation system \eqref{sys:full_relaxation_Euler_LM} is hyperbolic and is a stable diffusive approximation of \eqref{sys:nondimEuler} under the Mach number independent sub-characteristic condition $a > \rho \sqrt{\partial_\rho p(\rho,e)}.$ 
	It has the following linearly degenerate eigenvalues 
	$$
	\lambda^u = \mathbf{u}, ~ \lambda^\pm = \mathbf{u} \pm \frac{a}{\rho}, ~ \lambda^\pm_M = \mathbf{u} \pm \frac{a}{M \rho},
	$$
	where $\lambda^u$ has multiplicity 4.
	For $M<1$, the eigenvalues have the ordering
	$$
	\lambda_M^- < \lambda^- < \lambda^u < \lambda^+ < \lambda^+_M.
	$$
\end{lemma}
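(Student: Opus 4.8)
The plan is to establish the four claims---hyperbolicity, the explicit eigenvalues with their multiplicities, linear degeneracy, and diffusive stability under the sub-characteristic condition---one after another, reducing the first three to an essentially one-dimensional computation. First I would use the rotational invariance of \eqref{sys:full_relaxation_Euler_LM} to fix a unit direction $\mathbf{n}$, so that it suffices to diagonalise the Jacobian of the flux in the direction $\mathbf{n}$. Instead of differentiating the conservative flux, I would pass to primitive variables $(\rho,\mathbf{u},s,\pi,\hat{\mathbf{u}},\psi)$, with $s$ an entropy-type variable carrying the energy, and use the continuity equation to rewrite each operator $(\rho\,\cdot)_t+\nabla\cdot(\rho\mathbf{u}\,\cdot)$ as $\rho(\partial_t+\mathbf{u}\cdot\nabla)$ in the equations for $\pi$, $\hat{\mathbf{u}}$ and $\psi$. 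This is the decisive simplification offered by the Suliciu structure: since $\pi$ and $\psi$ have replaced the physical pressure, the principal part becomes linear in the relaxed variables and its coefficient matrix acquires a block form.

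Writing $u_n=\mathbf{u}\cdot\mathbf{n}$ and $\hat u_n=\hat{\mathbf{u}}\cdot\mathbf{n}$ for the normal components, I expect the quasilinear coefficient matrix to split into the two acoustic blocks
\begin{equation*}
\begin{pmatrix} u_n & 1/\rho \\ a^2/\rho & u_n \end{pmatrix}\ \text{ on }(u_n,\pi), \qquad \begin{pmatrix} u_n & 1/(M^2\rho) \\ a^2/\rho & u_n \end{pmatrix}\ \text{ on }(\hat u_n,\psi),
\end{equation*}
coupled in a strictly upper-triangular way (the two blocks interact only through the $\tfrac{1-M^2}{M^2}\psi$ term in the momentum equation), while $\rho$, $s$ and the transverse components of $\mathbf{u}$ and $\hat{\mathbf{u}}$ are transported at speed $u_n$ and so contribute only the factor $(u_n-\lambda)$ each. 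The characteristic polynomial therefore factors as
\begin{equation*}
(u_n-\lambda)^{2d}\Big[(u_n-\lambda)^2-\frac{a^2}{\rho^2}\Big]\Big[(u_n-\lambda)^2-\frac{a^2}{M^2\rho^2}\Big],
\end{equation*}
giving exactly $\lambda^u=\mathbf{u}$ with multiplicity $2d$ (hence $4$ in two dimensions), $\lambda^\pm=\mathbf{u}\pm a/\rho$ and $\lambda^\pm_M=\mathbf{u}\pm a/(M\rho)$. As $a,\rho,M>0$ all roots are real, and exhibiting the associated eigenvectors---the two acoustic pairs together with the $2d$ advected directions---shows the matrix is diagonalisable, i.e.\ the system is hyperbolic. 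The ordering for $M<1$ is then immediate from $a/(M\rho)>a/\rho>0$.

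Linear degeneracy I would verify field by field via Lax's criterion $\nabla\lambda\cdot r\equiv0$: each eigenvalue depends only on $\mathbf{u}$, $\rho$ and the constants $a,M$, and along the corresponding wave the quantity $u_n\mp a/\rho$ (respectively $u_n\mp a/(M\rho)$) is a Riemann invariant, while $u_n$ itself is constant across the $\lambda^u$-field; in each case the directional derivative of the eigenvalue vanishes. Finally, for the stable diffusive approximation I would carry out a Chapman--Enskog expansion, writing $\pi=p+\varepsilon\pi_1+\cdots$, $\psi=p+\varepsilon\psi_1+\cdots$, $\hat{\mathbf{u}}=\mathbf{u}+\varepsilon\hat{\mathbf{u}}_1+\cdots$, substituting into \eqref{sys:full_relaxation_Euler_LM} and collecting the $O(\varepsilon)$ terms to recover \eqref{sys:nondimEuler} together with a second-order operator; demanding that the induced viscosity be nonnegative reproduces, through the interlacing of the frozen speeds $\mathbf{u}\pm a/\rho$ and $\mathbf{u}\pm a/(M\rho)$ with the equilibrium acoustic speed $\mathbf{u}\pm M^{-1}\sqrt{\partial_\rho p}$, the $M$-independent condition $a>\rho\sqrt{\partial_\rho p}$ in the spirit of \cite{ChenLevermoreLiu1994}. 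I expect this last step to be the main obstacle, since it is where the sign of the dissipation must be controlled and where the precise structure of the energy equation enters, whereas the eigenvalue analysis is reduced by the block form to two elementary $2\times2$ determinants.
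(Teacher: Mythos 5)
The paper does not actually prove this lemma: it states it and defers entirely to \cite{BerthonKlingenbergZenk2018}, so there is no in-paper argument to compare against. Your outline is, however, essentially the standard route and mirrors what the paper itself does for the explicit subsystem in Lemma \ref{lem:EW}: pass to primitive variables, use the continuity equation to reduce $(\rho\,\cdot)_t+\nabla\cdot(\rho\mathbf{u}\,\cdot)$ to pure advection, and read the eigenstructure off the resulting block form. Your two $2\times 2$ acoustic blocks are correct and yield $\lambda^\pm$ and $\lambda^\pm_M$; the block-triangular coupling argument is sound, though slightly loosely stated --- besides the $\tfrac{1-M^2}{M^2}\psi$ term in the momentum equation, the density and energy rows also couple to $u_n$ (and to $\pi,\psi$ through the energy flux), but since nothing couples back to $\rho$ or $e$ their columns remain trivial and the characteristic polynomial still factors as you claim. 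Your multiplicity count $2d$ for $\lambda^u$ is the correct general statement (the lemma's ``4'' is the case $d=2$), and the Lax-criterion check of linear degeneracy goes through exactly as you describe.

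The one genuinely soft spot is the last claim. You assert that a Chapman--Enskog expansion ``reproduces'' the condition $a>\rho\sqrt{\partial_\rho p}$, but you do not carry out the computation, and for this model it is not the textbook single-pressure Suliciu calculation: there are two relaxed pressures $\pi,\psi$ and a relaxed velocity $\mathbf{\hat u}$, and one must verify that the $O(\varepsilon)$ correction produces a viscosity that is nonnegative \emph{uniformly in $M$} --- which is precisely the content of the phrase ``Mach number independent sub-characteristic condition'' and the reason the paper cites \cite{BerthonKlingenbergZenk2018} rather than reproving it. As a proof of the hyperbolicity, eigenvalue, and linear-degeneracy claims your proposal is complete in outline; for the stability claim it is a correct plan but not yet a proof.
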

In the case of $M=1$ the waves given by $\lambda^{\pm}$ and $\lambda_M^\pm$ collapse to the waves $\lambda^\pm$ which have then multiplicity 2 respectively.

To shorten notations we will refer to the original system \eqref{sys:nondimEuler} by 
\begin{align}
w_t + \nabla \cdot f(w) = 0,
\label{sys:nondim_short}
\end{align}
where $w = (\rho, \rho \mathbf{u}, E)^T$ denotes the physical variables and the flux function is given by $
f(w) = 
\begin{pmatrix}
\rho \mathbf{u} \\
\rho \mathbf{u} \otimes \mathbf{u} + \frac{p}{M^2}\mathbb{I} \\
\mathbf{u}(E + p)
\end{pmatrix}$.
The relaxation model \eqref{sys:nondimEuler} is given by 
\begin{align}
W_t + \nabla \cdot \mathcal{F}(W) = \frac{1}{\varepsilon}R(W),
\label{sys:relax_short}
\end{align}
where $W = (\rho, \rho \mathbf{u}, E, \rho \pi, \rho \mathbf{\hat{u}}, \rho \psi)$ denotes the state vector, $\mathcal{F}$ the flux function as defined in \eqref{sys:full_relaxation_Euler_LM} and $R$ the relaxation source term given by
\begin{align*}
R(W) = 
\begin{pmatrix}
0 \\ 0 \\ 0 \\ \rho(p - \pi) \\ \rho(\mathbf{u} - \mathbf{\hat{u}}) \\ \rho(p - \psi)
\end{pmatrix}.
\end{align*}
The relaxation equilibrium state is defined as
\begin{equation}
\label{eq:relax_equ}
W^{\text{eq}} = (\rho, \rho\mathbf{u},E,\rho p(\rho,e), \rho \mathbf{u},\rho p(\rho,e)).
\end{equation}
The connection between \eqref{sys:nondim_short} and \eqref{sys:relax_short} can be established, following \cite{ChenLevermoreLiu1994}, through the matrix $Q \in \mathbb{R}^{2+d\times 2(2+d)}$ defined by
\begin{equation}
Q=\begin{pmatrix}
\mathbb{I}_{2+d} & 0_{2+d}
\end{pmatrix}
\label{eq:Q}
\end{equation}
where $d$ is the dimension.
For all states $W$ it is satisfied $QR(W) = 0$ and the physical variables are then recovered by $w = QW$ and the fluxes are connected by $f(w) = Q\mathcal{F}(W^\text{eq}).$

\section{Time semi-discrete scheme}
\label{sec:time-semi}
As we have seen in Lemma \ref{lem:relax_sys_prob_orig}, the largest eigenvalue $|\lambda_M^\pm|$ of the relaxation model \eqref{sys:full_relaxation_Euler_LM} tends to infinity as $M$ goes to 0. 
Using a time explicit scheme results in a very restrictive CFL condition.
By using an IMEX approach as done in \cite{PareschiRusso2005,BoscarinoRussoScandurra2018}, we can avoid the Mach number dependence of the time step.

We rewrite the relaxation system \eqref{sys:full_relaxation_Euler_LM}/\eqref{sys:relax_short} in the following form:
\begin{equation}
	W_t + \nabla \cdot F(W) + \frac{1}{M^2}\nabla \cdot G(W) = \frac{1}{\varepsilon} R(W).
	\label{eq:decomp}
\end{equation}
In \eqref{eq:decomp}, we have split the flux $\mathcal{F}$ in \eqref{sys:relax_short} into a flux function $F$ which will contain the explicit terms and a flux function $G$ which will contain the terms treated implicitly.
For efficiency, we want to have as many explicit terms as possible as long as the eigenvalues of $F$ are independent of the Mach number. 
To avoid especially inverting a large non-linear system, we treat the non-linear advection terms explicitly.
This results in the following following flux functions
\begin{align}
F(W) = 
\begin{pmatrix} 
\rho \mathbf{u} \\
\rho \mathbf{u}\otimes\mathbf{u} + \pi \mathbbm{1} + \frac{1-M^2}{M^2} \psi \mathbbm{1} \\
( E + M^2 \pi + (1-M^2)\psi)\mathbf{u} \\
\rho \pi \mathbf{u} + a^2 \mathbf{u}\\
\rho \mathbf{u} \otimes \mathbf{\hat{u}} \\
\rho \psi \mathbf{u}
\end{pmatrix}
\text{ and }
G(W) = 
\begin{pmatrix}
0 \\ 0 \\ 0 \\ 0 \\ \psi\\ a^2 M^2\mathbf{\hat{u}}
\end{pmatrix}.
\label{eq:DefFG}
\end{align}
We see, that $F$ contains the flux $f$ of the Euler equations. \eqref{sys:nondimEuler} whereas $G$ and $R$ only act on the relaxation variables.
To obtain a time semi-discrete scheme we order the implicit and explicit steps as
 \begin{align}
 \text{Implicit: }	W_t + \frac{1}{M^2}\nabla \cdot G(W) & = 0, \label{eq:implicit} \\
 \text{Explicit: }~~~~~ 	W_t + \nabla \cdot F(W) & = 0, \label{eq:explicit}\\
 \text{Projection: }~~~~~~~~~~~~~~~~~~~~~	W_t &= \frac{1}{\varepsilon} R(W). \label{eq:projection}
 \end{align} 
 The relaxation source term in \eqref{eq:projection} is solved by projecting the variables onto the equilibrium manifold and thereby reaching the relaxation equilibrium state \eqref{eq:relax_equ}.
 The formal time semi-discrete scheme is then given by 
  \begin{align}
 W^{(1)} - W^{n,\text{eq}} + \frac{\Delta t}{M^2} \nabla \cdot G(W^{(1)}) &=0, \label{eq:t-semi-impl} \\
 W^{(2)} - W^{(1)} + \Delta t \nabla \cdot F(W^{(1)}) &=0, \label{eq:t-semi-expl} \\
 W^{n+1} = W^{(2),\text{eq}}\label{eq:t-semi-proj},
 \end{align}
 where we consider the data at time $t^n$ to be at relaxation equilibrium $W^{n,\text{eq}}$.
 First we solve the implicit equation \eqref{eq:update_impl} to gain $W^{(1)}$, followed by the explicit step where we calculate $W^{(2)}$. 
 To get the variables $W^{n+1}$ at the new time level, we project $W^{(2)}$ onto its equilibrium state. 
 This procedure results into a first order scheme in time. 

\section{Asymptotic properties}
\label{sec:AP}
We consider as continuous limit equations the incompressible Euler equations given by
\begin{align}
\begin{split}
\rho &= \text{const.}\\
u_t + u \cdot \nabla u + \nabla \Pi &= 0\\
\nabla \cdot u & = 0
\end{split}
\label{sys:incompr}
\end{align}
with a dynamical pressure described by $\Pi$.

Since the solution must converge to the solution of the incompressible system \eqref{sys:incompr} as $M \to 0$, see for example \cite{GuillardViozat1999,Dellacherie2010}, its asymptotic expansion under slipping or periodic boundary conditions must satisfy
\begin{align}
\rho &= \rho_0 + \mathcal{O}(M), &\rho_0 &= \text{const.} \label{eq:well_prep_data_rho}\\
u &= u_0 + \mathcal{O}(M), &\nabla \cdot u_0 &= 0 \label{eq:well_prep_data_u} \\
p &= p_0 + \mathcal{O}(M^2), &p_0 &= \text{const.}
\label{eq:well_prep_data_p}
\end{align} 
We refer to \eqref{eq:well_prep_data_rho}, \eqref{eq:well_prep_data_u}, \eqref{eq:well_prep_data_p} as well-prepared  data and summarize it in the following set
\begin{equation}
\label{set:AP}
\Omega_{wp} = \left\lbrace w \in \mathbb{R}^{d+2}, \nabla \rho_0 = 0, \nabla p_0 = 0, \nabla \cdot \mathbf{u}_0 = 0 \right\rbrace.
\end{equation} 
For simplicity we show the AP property for the time semi-implicit scheme. 
The same steps can be followed with the fully discretized scheme given in Section \ref{sec:fully_discrete}.

To show the AP property we will exploit some properties of the fast pressure $\psi^{(1)}$ obtained in the implicit step \eqref{eq:Implicit_part}.
  
\subsection{Asymptotic behaviour of $\psi^{(1)}$}
\label{sec:AP_psi}
Due to the sparse structure of $G$ defined in \eqref{eq:DefFG}, the implicit part reduces to solving only two coupled equations in the relaxation variables $\mathbf{\hat{u}}, \psi$ given by
\begin{align}
\begin{split}
(\rho\mathbf{\hat{u}})_t + \frac{1}{M^2} \nabla \psi &= 0, \\
(\rho\psi)_t + a^2 \nabla \cdot \mathbf{\hat{u}} &= 0, 
\end{split}
\label{eq:Implicit_part}
\end{align}
with the eigenvalues $\lambda_M = \pm \frac{a}{\rho M}$. 
As done in \cite{CordierDegondKumbaro2012}, we rewrite the coupled system \eqref{eq:Implicit_part} into one equation for $\psi$ starting from the time-semi-discrete scheme
\begin{align}
\frac{\rho^{(1)} - \rho^n}{\Delta t} &= 0, \label{eq:time-semi-rho}\\
\frac{(\rho \mathbf{\hat{u}})^{(1)} - (\rho \mathbf{\hat{u}})^{n} }{\Delta t} + \frac{1}{M^2}\nabla \psi^{(1)} &= 0, \label{eq:time-semi-uhat}\\
\frac{(\rho \psi)^{(1)} - (\rho \psi)^n}{\Delta t} + a^2 \nabla \cdot \mathbf{\hat{u}}^{(1)} &=0. \label{eq:time-semi-psi}
\end{align}
To emphasize that \eqref{eq:Implicit_part} also depends on the density, we have included the density update \eqref{eq:time-semi-rho} into the time-semi-discrete system.
From equation \eqref{eq:time-semi-rho} we see that $\rho^{(1)} = \rho^n$.
To simplify notation we define $\tau_i^n = \frac{1}{\rho_i^n}$.
Inserting \eqref{eq:time-semi-uhat} into \eqref{eq:time-semi-psi} we can reduce the implicit system to only one equation with an elliptic operator for $\psi$ given as
\begin{align}
\psi^{(1)}-\frac{\Delta t^2 a^2}{M^2} \tau^n \nabla \cdot (\tau^n \nabla \psi^{(1)}) &= \psi^n - \Delta t a^2 \tau^n \nabla \cdot u^n. 
\label{eq:elliptic_impl}
\end{align}
On the right hand side of \eqref{eq:elliptic_impl} we have already made use of the fact that $\hat{u}^n = u^n$ since we start from equilibrium data.
We will see that the correct scaling of $\psi^{(1)}$ with respect to the Mach number is important not only for showing the AP property of the scheme, but also for the positivity of density and internal energy as well as for the Mach number independent diffusion of the fully discretized scheme.

To prevent $\mathcal{O}(M)$ pressure perturbations at the boundaries which would destroy the well-prepared nature of the pressure, we require boundary conditions on $\psi$ which preserve the scaling of the pressure in time. 
For a computational domain $D$, we set
\begin{align}
\begin{rcases}
\psi_0^{(1)} &= p_0^n\\
\psi_1^{(1)} &= 0
\end{rcases}
\text{ on } \partial D.
\label{bc:pressure}
\end{align}

\begin{lemma}[Scaling of $\psi^{(1)}$]
	Let $w^n \in \Omega_{wp}$ be equipped with the boundary conditions \eqref{bc:pressure}.
	Then the Mach number expansion of $\psi^{(1)}$ after the first stage satisfies 
	\begin{equation}
	\label{eq:psi_expansion}
	\psi^{(1)} = p_0 + M^2 \psi_2 + \mathcal{O}(M^3),
	\end{equation} 
	where $p_0$ is constant.
	\label{lem:psi_scaling}
\end{lemma}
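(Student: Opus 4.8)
The plan is to feed a formal Mach-number expansion into the scalar elliptic equation \eqref{eq:elliptic_impl} and match powers of $M$, using the well-prepared structure \eqref{set:AP} to control both the source term and the coefficients of the elliptic operator. First I would posit $\psi^{(1)} = \psi_0 + M\psi_1 + M^2\psi_2 + \mathcal{O}(M^3)$ and record the expansions of the data: since $w^n\in\Omega_{wp}$, we have $\rho^n=\rho_0+\mathcal{O}(M)$ with $\rho_0$ constant, hence $\tau^n=\tfrac{1}{\rho^n}=\tau_0+\mathcal{O}(M)$ with $\tau_0=\tfrac{1}{\rho_0}$ a nonzero constant; the equilibrium value $\psi^n=p^n=p_0+\mathcal{O}(M^2)$ by \eqref{eq:well_prep_data_p}; and $\nabla\cdot u^n = \nabla\cdot u_0 + \mathcal{O}(M)=\mathcal{O}(M)$ by \eqref{eq:well_prep_data_u}. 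The point is that the only genuinely singular contribution in \eqref{eq:elliptic_impl} is the term $\tfrac{\Delta t^2 a^2}{M^2}\,\tau^n\nabla\cdot(\tau^n\nabla\psi^{(1)})$, so its two most singular orders must cancel.

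Matching the two leading orders is the heart of the argument. At order $M^{-2}$ the operator contributes $\tau_0\,\nabla\cdot(\tau_0\nabla\psi_0)=\tau_0^2\,\nabla\cdot\nabla\psi_0$, and since nothing else in \eqref{eq:elliptic_impl} reaches order $M^{-2}$, I obtain $\nabla\cdot\nabla\psi_0=0$, i.e. $\psi_0$ is harmonic on $D$. Combined with the constant Dirichlet datum $\psi_0=p_0^n$ from \eqref{bc:pressure}, uniqueness for the Dirichlet Laplacian (equivalently the maximum principle) forces $\psi_0\equiv p_0$ constant. At order $M^{-1}$ the crucial observation is that, because $\psi_0$ is now constant, every term in which $\tau^n$ rather than $\psi^{(1)}$ is differentiated carries a factor $\nabla\psi_0=0$ and therefore drops out; what survives is again $\tau_0^2\,\nabla\cdot\nabla\psi_1=0$. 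With the homogeneous Dirichlet datum $\psi_1=0$ from \eqref{bc:pressure}, uniqueness gives $\psi_1\equiv 0$.

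Putting these together yields exactly $\psi^{(1)}=p_0+M^2\psi_2+\mathcal{O}(M^3)$ with $p_0$ constant, which is the claim. For completeness one may inspect the order $M^0$ balance, where $\nabla\cdot u_0=0$ and $\psi^n=p_0+\mathcal{O}(M^2)$ make the right-hand side consistent and determine $\psi_2$, but this is not needed for the stated scaling. I expect the main obstacle to be the careful bookkeeping of the variable-coefficient operator $\tau^n\nabla\cdot(\tau^n\nabla\,\cdot\,)$: at each matched order one must verify that the terms differentiating $\tau^n$ are subleading, which hinges precisely on having already shown $\psi_0$ constant and $\psi_1$ zero, so the orders must be treated strictly in sequence. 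The second, more routine point to justify is the uniqueness of the Dirichlet problem for $\nabla\cdot\nabla$ on the bounded domain $D$, which is what lets me pass from the harmonicity of $\psi_0$ and $\psi_1$ to their explicit constant values.
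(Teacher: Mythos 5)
Your proposal is correct and follows essentially the same route as the paper: insert the formal expansion into the elliptic update \eqref{eq:elliptic_impl}, match the $\mathcal{O}(M^{-2})$ and $\mathcal{O}(M^{-1})$ orders to get Laplace problems for $\psi_0$ and $\psi_1$ with the Dirichlet data \eqref{bc:pressure}, and conclude $\psi_0=p_0$, $\psi_1=0$. Your added care with the variable-coefficient operator $\tau^n\nabla\cdot(\tau^n\nabla\,\cdot\,)$ --- noting that the terms differentiating $\tau^n$ drop out only after $\psi_0$ is known to be constant, so the orders must be treated in sequence --- is exactly the bookkeeping the paper leaves implicit.
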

\begin{proof}
	Let us assume that the expansion of $\psi^{(1)}$ is given by 
	\begin{equation}
	\psi^{(1)} = \psi_0 + M \psi_1 + M^2 \psi_2 + \mathcal{O}(M^3).
	\label{eq:psi_scaling_assume}
	\end{equation}
	Since we start in $\Omega_{wp}$, we have well-prepared data as given in \eqref{eq:well_prep_data_rho}\eqref{eq:well_prep_data_u}\eqref{eq:well_prep_data_p}.
	We insert the well-prepared data $w^n$ and \eqref{eq:psi_scaling_assume} into the implicit update equation \eqref{eq:elliptic_impl}.
	Separating the $\mathcal{O}(M^{-2})$ terms, we find using the boundary conditions \eqref{bc:pressure}
	\begin{align*}
	\begin{cases}
		\Delta \psi_0 &= 0 \text{ in } D \\
		\psi_0 &= p_0 \text{ on } \partial D
	\end{cases}.
	\end{align*}
	This leads to $\psi_0 = p_0$ on $\overline{D}$. 
	Separating the $\mathcal{O}(M^{-1})$ terms and using that $\psi_0 = p_0 = \text{const}$, we find 
	\begin{align}
	\begin{cases}
	\Delta \psi_1 &= 0 \text{ in } D \\
	\psi_1 &= 0 \text{ on } \partial D
	\end{cases}
	\end{align}
	which leads to $\psi_1 = 0$ on $\overline{D}$.
	As a last step, we collect the $\mathcal{O}(M^{-1})$ terms and using that $\psi_0 = p_0$ as well as $\psi_1 = 0$ on $\overline{D}$.
	It is not necessary to impose special boundary conditions for $\psi_2$. 
	Thus we find
	\begin{align}
	\Delta \psi_2 &= 0 \text{ in } D.
	\label{eq:laplacianPsi2}
	\end{align}
\end{proof}

This shows that the fast pressure $\psi^{(1)}$ after the implicit step is still well-prepared. 

\subsection{Asymptotic preserving property}

We show that the time discretization of \eqref{sys:full_relaxation_Euler_LM} in the $M \to 0$ limit coincides with a time discretization of the incompressible Euler equations \eqref{sys:incompr}. 
We consider well-prepared data $w^n \in \Omega_{wb}$ and the Mach number expansion of $\psi^{(1)}$ from Lemma \ref{lem:psi_scaling}.
For the total energy defined in \eqref{eq:Etot}, we find the following Mach number expansion
\begin{equation}
	E = \rho_0 e_0 + M(\rho_1 e_0 + \rho_0 e_1 ) + M^2(|\mathbf{u}_0|^2 + \rho_2 e_0 + \rho_1 e_1 + \rho_0 e_2 ) + \mathcal{O}(M^3)
	\label{eq:MexpEtot}
\end{equation}
Inserting the Mach number expansions of $w^n, \psi^{(1)}$ and $E^n$ into the density, momentum and energy equation of \eqref{eq:t-semi-expl} and considering the $\mathcal{O}(M^0)$ order terms we have
\begin{align}
	\begin{split}
	\frac{\rho_0^{n+1} - \rho^n_0}{\Delta t} + \nabla \cdot \rho_0^n u_0^n &= 0, \\
	\frac{\rho_0^{n+1} u_0^{n+1}- \rho_0^n u_0^n}{\Delta t} + \nabla \cdot \left( \rho_0^n u_0^n \otimes u_0^n + \psi_2^{(1)}\right) &= 0, \\
	\frac{\rho^{n+1} e_0^{n+1} - \rho_0^n e_0^{n}}{\Delta t} + \nabla \cdot u_0^n(\rho_0^n e_0^n + \psi_0^{(1)}) &= 0, \\
	\frac{\rho_0^{n+1} e_0^{n+1} - \rho_0^n e_0^{n}}{\Delta t} + \nabla \cdot u_0^n(\rho_0^n e_0^n + \psi_0^{(1)}) &= 0
	\label{eq:ZeroOrder_exp}
	\end{split}
\end{align}
Let us assume, the pressure at time $t^{n+1}$ has the Mach number expansion $p^{n+1} = p_0^{n+1} + Mp_1^{n+1} + \mathcal{O}(M^2)$.
Note that we have from Lemma \ref{lem:psi_scaling} that $\psi_0^{(1)} = p_0$ and thus $\nabla \psi_0^{(1)} = \nabla p_0 = 0$.
Since $w^n \in \Omega_{wp}$ we have $\nabla \rho_0^n = 0$ and $\nabla \cdot u_0^n = 0$.
	Further we use $p_0 = (\gamma -1) \rho_0 e_0$. 
	Equipped with that we can simplify \eqref{eq:ZeroOrder_exp} to 
\begin{align}
\frac{\rho_0^{n+1} - \rho^n_0}{\Delta t} &= 0, \label{eq:incompr_disc_rho}\\
\frac{u_0^{n+1}- u_0^n}{\Delta t} + u_0^n \cdot \nabla u_0^n + \frac{\nabla \psi_2^{(1)}}{\rho_0^n} &= 0, \label{eq:incompr_disc_u}\\
\frac{p_0^{n+1} - p_0^{n}}{\Delta t} &= 0.
\label{eq:incompr_disc_p}
\end{align}
Especially from equations \eqref{eq:incompr_disc_rho} and \eqref{eq:incompr_disc_p} we see that $\rho_0^{n+1}$ and $p_0^{n+1}$ are constants. 
Looking at the $\mathcal{O}(M^1)$ terms we have from the energy equation
\begin{align}
	p_1^{n+1} + \Delta t \nabla \cdot u_1^n = 0.
\end{align}
This means the density and pressure at time $t^{n+1}$ are well-prepared up to $\mathcal{O}(\Delta t)$ perturbation as in \eqref{eq:well_prep_data_rho}, \eqref{eq:well_prep_data_p}.
To be consistent with a time discretization of the incompressible Euler equations  \eqref{sys:incompr} the divergence of $\mathbf{u}_0$ at time $t^{n+1}$ defined by $\nabla \cdot \mathbf{u}_0^{n+1}$ has to be at least of order $\Delta t$. 
To show this, we apply the divergence on the velocity update \eqref{eq:incompr_disc_u} which gives
\begin{align}
	\nabla \cdot \mathbf{u}_0^{n+1} = \nabla \cdot \mathbf{u}_0^n + \Delta t \nabla \cdot (u_0^n \cdot \nabla u_0^n + \frac{\Delta \psi_2^{(1)}}{\rho^n_0}).
	\label{eq:divu_1}
\end{align}
In the proof of Lemma \ref{lem:psi_scaling}, we have shown that $\Delta \psi_2^{(1)} = 0$ on $\partial D$, see \eqref{eq:laplacianPsi2}.
Using \eqref{eq:laplacianPsi2} together with $\nabla \cdot u^n_0 = 0$, we can simplify \eqref{eq:divu_1} to 
\begin{align*}
\nabla \cdot \mathbf{u}_0^{n+1} = \Delta t \nabla \cdot (u_0^n \cdot \nabla u_0^n) = \mathcal{O}(\Delta t).
\end{align*}
In summary, we have shown the following theorem. 
\begin{theorem}[AP property]
	Let $w^n \in \Omega_{wp}$. Then under the boundary conditions \eqref{bc:pressure} the scheme \eqref{eq:t-semi-impl}, \eqref{eq:t-semi-expl}, \eqref{eq:t-semi-proj} is asymptotic preserving when $M$ tends to $0$, 
	in the sense that if $w^n \in \Omega_{wp}$ then it is $w^{n+1}\in \Omega_{wp}$ and in the limit $M\to 0$ the time-semi-discrete scheme is a consistent discretization of the incompressible Euler equations \eqref{sys:incompr}.
	
\end{theorem}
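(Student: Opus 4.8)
The plan is to track the Mach-number expansion of the physical variables through one full step of the scheme \eqref{eq:t-semi-impl}--\eqref{eq:t-semi-proj} and to verify the two assertions separately: first that $\Omega_{wp}$ is preserved, and second that the leading-order update coincides with a consistent time discretization of \eqref{sys:incompr}. The starting observation is that, because the implicit flux $G$ in \eqref{eq:DefFG} acts only on the relaxation variables $\mathbf{\hat{u}}$ and $\psi$, the implicit step \eqref{eq:t-semi-impl} leaves the conserved physical quantities $(\rho,\rho\mathbf{u},E)$ unchanged and only produces the fast pressure $\psi^{(1)}$ and velocity $\mathbf{\hat{u}}^{(1)}$. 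Hence all changes in $w$ from $t^n$ to $t^{n+1}$ are due to the explicit step \eqref{eq:t-semi-expl}, while the projection \eqref{eq:t-semi-proj} merely resets the relaxation variables to the equilibrium \eqref{eq:relax_equ} without altering $(\rho,\rho\mathbf{u},E)$.

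First I would invoke Lemma \ref{lem:psi_scaling}, which under the boundary conditions \eqref{bc:pressure} yields the expansion \eqref{eq:psi_expansion}, namely $\psi^{(1)} = p_0 + M^2\psi_2 + \mathcal{O}(M^3)$ with $p_0$ constant and $\Delta\psi_2=0$ in $D$ as in \eqref{eq:laplacianPsi2}. I would then substitute this, together with the well-prepared expansions \eqref{eq:well_prep_data_rho}--\eqref{eq:well_prep_data_p} and the total-energy expansion \eqref{eq:MexpEtot}, into the density, momentum and energy components of the explicit update \eqref{eq:t-semi-expl}, and collect the $\mathcal{O}(M^0)$ terms to obtain \eqref{eq:ZeroOrder_exp}. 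Using $\psi_0^{(1)}=p_0=\text{const}$, the constraints $\nabla\rho_0^n=0$ and $\nabla\cdot\mathbf{u}_0^n=0$ coming from $w^n\in\Omega_{wp}$, and the equation of state $p_0=(\gamma-1)\rho_0 e_0$, these collapse to the leading-order system \eqref{eq:incompr_disc_rho}--\eqref{eq:incompr_disc_p}.

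From here both claims follow. Equations \eqref{eq:incompr_disc_rho} and \eqref{eq:incompr_disc_p} immediately give $\rho_0^{n+1}$ and $p_0^{n+1}$ constant, so $\nabla\rho_0^{n+1}=0$ and $\nabla p_0^{n+1}=0$, and inspecting the $\mathcal{O}(M)$ energy balance confirms well-preparedness of density and pressure up to the expected $\mathcal{O}(\Delta t)$ consistency error. For the divergence constraint I would apply $\nabla\cdot$ to the velocity update \eqref{eq:incompr_disc_u} to get \eqref{eq:divu_1}, and then use $\Delta\psi_2=0$ together with $\nabla\cdot\mathbf{u}_0^n=0$ to conclude $\nabla\cdot\mathbf{u}_0^{n+1}=\Delta t\,\nabla\cdot(\mathbf{u}_0^n\cdot\nabla\mathbf{u}_0^n)=\mathcal{O}(\Delta t)$. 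Thus $w^{n+1}\in\Omega_{wp}$ up to $\mathcal{O}(\Delta t)$, and the pair \eqref{eq:incompr_disc_rho}--\eqref{eq:incompr_disc_u} supplemented by $\nabla\cdot\mathbf{u}_0=0$ is precisely a first-order explicit time discretization of the incompressible Euler equations \eqref{sys:incompr}, with $\psi_2/\rho_0$ playing the role of the dynamical pressure $\Pi$.

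The step I expect to carry the whole argument is the scaling of $\psi^{(1)}$ provided by Lemma \ref{lem:psi_scaling}. The momentum flux in \eqref{eq:t-semi-expl} contains $\psi^{(1)}$ through the singular combination $\pi^{(1)}+\frac{1-M^2}{M^2}\psi^{(1)}$, so the $\mathcal{O}(M^0)$ momentum balance sees $\frac{1}{M^2}\nabla\psi^{(1)}$; only because the $\mathcal{O}(1)$ and $\mathcal{O}(M)$ parts of $\psi^{(1)}$ are a spatial constant and zero, respectively, does this singular gradient collapse to the finite term $\nabla\psi_2$ instead of blowing up. Checking that no lower-order pressure gradient survives, and that the harmonicity \eqref{eq:laplacianPsi2} propagates to annihilate the leading-order divergence, is the delicate bookkeeping on which both the invariance of $\Omega_{wp}$ and the consistency with \eqref{sys:incompr} rest; the remaining manipulations amount to routine order-by-order matching.
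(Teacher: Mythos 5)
Your proposal is correct and follows essentially the same route as the paper: invoke Lemma \ref{lem:psi_scaling} for the scaling of $\psi^{(1)}$, insert the well-prepared expansions and \eqref{eq:MexpEtot} into the explicit step, collect the $\mathcal{O}(M^0)$ terms to obtain \eqref{eq:incompr_disc_rho}--\eqref{eq:incompr_disc_p}, check the $\mathcal{O}(M)$ energy balance, and apply the divergence to the velocity update using $\Delta\psi_2=0$ and $\nabla\cdot\mathbf{u}_0^n=0$. Your identification of the cancellation of the singular $\frac{1}{M^2}\nabla\psi^{(1)}$ term as the crux is exactly the role the paper assigns to Lemma \ref{lem:psi_scaling}.
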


\section{Derivation of the fully discrete scheme}
\label{sec:fully_discrete}
For simplicity, we develop the fully discretized scheme in one space dimension, but it can be straightforwardly extended to $d$ dimensions.
In the implicit update \eqref{eq:elliptic_impl}, the space derivatives read
$$
\nabla \cdot (\tau \nabla \psi) = \partial_{x_1}(\tau \partial_{x_1}\psi) + \dots +\partial_{x_d}(\tau \partial_{x_d}\psi) \text{ and } \nabla \cdot u = \partial_{x_1} u_1 + \dots +\partial_{x_d} u_d
$$
and in the explicit part we apply dimensional splitting.

In the following we use a cartesian grid on a computational domain $D$ devided in $N$ cells $C_i = (x_{i-1/2},x_{i+1/2})$ of step size $\Delta x$. 
We use a standard finite volume setting, where we define at time $t^n$ the piecewise constant functions 
$$
	w(x,t^n) = w_i^n, \text{ for } x \in C_i.
$$ 
Using this notation, we apply centered differences for the implicit update \eqref{eq:elliptic_impl} and have
\begin{align}
\label{eq:update_impl}
\begin{split}
\psi_i^{(1)} &- \frac{\Delta t^2}{\Delta x^2}\frac{a^2}{M^2} \tau_i^n \left(\tau_{i-1/2}^n \psi_{i-1}^{(1)} - (\tau_{i-1/2}^n + \tau_{i+1/2}^n) \psi_i^{(1)} + \tau_{i+1/2}^n \psi_{i+1}^{(1)}\right) = \\
\psi_i^n & - \frac{\Delta t}{2\Delta x} a^2 \tau_i^n\left(u_{i+1}^n - u_{i-1}^n\right),
\end{split}
\end{align}
where $\tau_{i+1/2} = \frac{1}{2}\left(\tau_{i+1} + \tau_{i}\right)$. 

For the explicit part, we will use a Godunov type finite volume scheme following \cite{HartenLaxVanLeer1983} which we will describe in the following section. 

\subsection{Godunov type finite volume scheme} 

In the explicit step we consider the following equations as defined in \eqref{eq:DefFG}, \eqref{eq:t-semi-expl}
\begin{align}
\begin{split}
	\partial_t \rho + \partial_{x} \rho u &= 0, \\
	\partial_t \rho u + \partial_{x} (\rho u^2 + \pi + \frac{1-M^2}{M^2}\psi) &= 0,\\
	\partial_t E + \partial_{x}((E + M^2 \pi + (1-M^2)\psi) u) &= 0, \\
	\partial_t \rho \pi + \partial_{x}((\rho\pi + a^2)u) &= 0, \\
	\partial_t \rho\hat{u} + \partial_{x}(\rho u \hat{u}) &= 0, \\
	\partial_t \rho\psi + \partial_{x}(\rho \psi u) &= 0.
\end{split}
\label{sys:expl}
\end{align}

In the next result the structure of system \eqref{sys:expl} is summarized.
\begin{lemma}
	\label{lem:EW}
	System \eqref{sys:expl} admits the linear degenerate eigenvalues $\lambda^\pm =u \pm \frac{a}{\rho}$ and $\lambda^u = u$, where the eigenvalue $\lambda^u$ has multiplicity 4. 
	The relaxation parameter $a$ is independent of the Mach number as well as all eigenvalues. 
	The Riemann Invariants with respect to $\lambda^u$ are
	\begin{equation}
	\label{eq:RI_u}
	I_1^u = u,~ I_2^u = M^2 \pi + (1 - M^2) \psi
	\end{equation}
	and with respect to $\lambda^\pm$ 
	\begin{align}
	\label{eq:RI_pm}
	\begin{split}
	I_1^\pm &= u \pm \frac{a}{\rho},~I_2^\pm = \pi \mp a u, \\
	I^\pm_3 &= e - \frac{M^2}{2a^2}\pi^2 - \frac{1-M^2}{a^2} \pi \psi,\\ 
	I_{4}^\pm &= \hat{u},~~~I^\pm_5 = \psi.
	\end{split}
	\end{align}
\end{lemma}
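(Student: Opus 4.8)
The plan is to reduce \eqref{sys:expl} to its non-conservative quasilinear form and read the spectrum off a sparse coefficient matrix, exploiting that the relaxation parameter $a$ is a constant. Introducing the specific volume $\tau = 1/\rho$ and the total pressure $\Pi = M^2\pi + (1-M^2)\psi$, I would first use the continuity equation to eliminate the $\partial_t\rho$ terms from the remaining equations. A direct manipulation shows that $\hat u$ and $\psi$ are purely advected, $\partial_t\hat u + u\,\partial_x\hat u = 0$ and $\partial_t\psi + u\,\partial_x\psi = 0$, that the velocity and relaxed pressure obey the Suliciu acoustic pair
\[
\partial_t u + u\,\partial_x u + \tfrac{\tau}{M^2}\partial_x\Pi = 0, \qquad \partial_t\pi + u\,\partial_x\pi + a^2\tau\,\partial_x u = 0,
\]
and that the energy equation collapses, after subtracting the kinetic contribution supplied by the momentum equation, to the internal-energy transport law $\partial_t e + u\,\partial_x e + \tau\Pi\,\partial_x u = 0$. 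Obtaining this last reduction is the main computational step of the proof.

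In the ordered variables $V = (\tau, u, \pi, e, \hat u, \psi)$ the coefficient matrix $B$ defined by $\partial_t V + B(V)\partial_x V = 0$ is block structured: the columns corresponding to $e$ and $\hat u$ each contribute a single diagonal entry $u$, and expanding the determinant along the $\psi$ row leaves the classical $3\times3$ Suliciu minor in $(\tau,u,\pi)$. I would then compute
\[
\det(B - \lambda I) = (u-\lambda)^4\bigl[(u-\lambda)^2 - a^2\tau^2\bigr],
\]
which yields $\lambda^u = u$ with algebraic multiplicity $4$ and $\lambda^\pm = u \pm a\tau = u \pm a/\rho$. Since $a$ is constant and no factor of $M$ survives in the acoustic minor, every eigenvalue is manifestly independent of the Mach number. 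To confirm that $\lambda^u$ is genuinely of multiplicity $4$ rather than a Jordan block, I would solve $(B-uI)r = 0$ and exhibit four independent eigenvectors, spanning the $\tau$-, $e$- and $\hat u$-directions together with the combination $r_\pi = -\tfrac{1-M^2}{M^2}r_\psi$; the velocity component $r_u$ vanishes for all of them.

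For linear degeneracy, the decisive observation is that $\lambda^\pm = u \pm a/\rho$ coincides with the candidate invariant $I_1^\pm$, so once $I_1^\pm$ is shown to be a Riemann invariant of the $\pm$-field one obtains $\nabla\lambda^\pm\cdot r^\pm = 0$ for free; for $\lambda^u = u$ linear degeneracy is immediate because every eigenvector of $\lambda^u$ has vanishing velocity component, whence $\nabla\lambda^u\cdot r = \partial_u u\,(r)_u = 0$. It remains to verify the listed invariants. Solving $(B - \lambda^\pm I)r = 0$ gives, up to scaling, $r^\pm = (1, \mp a, -a^2, -\Pi, 0, 0)$ in the variables $V$; substituting this vector and checking $\nabla_V I_j^\pm \cdot r^\pm = 0$ for $j = 1,\dots,5$ confirms each invariant in \eqref{eq:RI_pm}. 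For the contact field, the two invariants in \eqref{eq:RI_u} are characterised by independence of $\tau, e, \hat u$ together with $\partial_\psi I = \tfrac{1-M^2}{M^2}\partial_\pi I$, which $I_1^u = u$ and $I_2^u = \Pi$ both satisfy.

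The step I expect to require the most care is the energy invariant $I_3^\pm = e - \tfrac{M^2}{2a^2}\pi^2 - \tfrac{1-M^2}{a^2}\pi\psi$. Its partial derivatives are $\partial_e I_3^\pm = 1$ and $\partial_\pi I_3^\pm = -\Pi/a^2$, so the cancellation $\nabla_V I_3^\pm\cdot r^\pm = (-\Pi/a^2)(-a^2) + (1)(-\Pi) = 0$ hinges on having the correct entry $r_e = -\Pi$ in the acoustic eigenvector, which in turn depends on the reduced internal-energy law $\partial_t e + u\,\partial_x e + \tau\Pi\,\partial_x u = 0$. Keeping the $M^2$ and $(1-M^2)$ weights consistent between the momentum flux, the energy flux, and the definition of $\Pi$ is the only place where an algebraic slip would propagate, so I would carry out that reduction explicitly and cross-check it against the $\mathcal{O}(M^0)$ energy balance already used in Section~\ref{sec:AP}.
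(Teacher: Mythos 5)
Your proposal is correct and follows essentially the same route as the paper: pass to a non-conservative quasilinear form in primitive variables, read off the eigenvalues and eigenvectors of the coefficient matrix, and verify linear degeneracy and the Riemann invariants through the criterion $\nabla_V I\cdot r=0$. The only differences are cosmetic (working in $\tau=1/\rho$ and $\Pi=M^2\pi+(1-M^2)\psi$ rather than $\rho$, writing out the characteristic polynomial, and observing that $\lambda^\pm=I_1^\pm$ gives linear degeneracy for free), and your eigenvectors agree with the paper's up to rescaling and reordering of the variables.
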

\begin{proof}
	We rewrite the equations \eqref{sys:expl} using primitive variables $V = (\rho, u, e, \pi, \hat{u}, \psi)$ in non-conservative form
	\begin{equation}
		V_t + \mathcal{A}(V)V_x = 0,
		\label{eq:nonconsform}
	\end{equation}
	where the matrix $\mathcal{A}(V)$ is given by
	\begin{align}
		\mathcal{A}(V) = 
		\begin{pmatrix}
		u& \rho& 0& 0& 0& 0 \\
		0& u& 0& \frac{1}{\rho}& 0& \frac{1-M^2}{M^2} \\
		0& \frac{M^2 \pi + (1-M^2)\psi}{\rho}& u & 0& 0&0\\
		0& \frac{a^2}{\rho}& 0 & u & 0 & 0 \\
		0& 0& 0& 0&u& 0 \\
		0& 0& 0& 0&0& u \\
		\end{pmatrix}.
	\end{align}
	It is easy to check that $\lambda^u, \lambda^\pm$ are eigenvalues of $\mathcal{A}(V)$.
	Associated to the eigenvalues, we find the respective eigenvectors
	\begin{align*}
		r_1^u = 
		\begin{pmatrix}
		0 \\ 0 \\ 0 \\ 1-\frac{1}{M^2} \\ 0 \\ 1
		\end{pmatrix}
		r_2^u = 
		\begin{pmatrix}
		0 \\ 0 \\ 0 \\ 0 \\ 1 \\ 0
		\end{pmatrix}
		r_3^u = 
		\begin{pmatrix}
		0 \\ 0 \\ 1 \\ 0 \\ 0 \\ 0 \\
		\end{pmatrix}
		r_4^u = 
		\begin{pmatrix}
		1 \\ 0\\ 0\\0 \\0 \\0 \\
		\end{pmatrix}
		r^\pm = 
		\begin{pmatrix}
		\frac{\rho ^2}{a^2}\\
		\pm \frac{1}{a}\\
		\frac{M^2\pi+(1-M^2)\psi }{a^2}\\
		1\\
		0\\
		0\\
		\end{pmatrix}
	\end{align*}
	We see that $\partial_V\lambda^u \cdot r_i = 0$, $\forall i=1,\dots,4$ and $\partial_V \lambda^\pm \cdot r^\pm = 0$.
	Thus all eigenvalues are linearly degenerate.
	A scalar function $I(V)$ is a Riemann invariant if for all eigenvectors $r$ associated to an eigenvalue $\lambda$, $\partial_V I(V) \cdot r = 0$ holds.
	It is straightforward to check, that \eqref{eq:RI_u} and \eqref{eq:RI_pm} are Riemann invariants.
	Since Riemann invariants are invariant under change of variables, the Riemann invariants of \eqref{eq:nonconsform} are the same as for the equations in conservation form \eqref{sys:expl}. 
	For more details, see \cite{Bouchut2004,BerthonKlingenbergZenk2018}.
\end{proof}

We will follow the theory of Harten, Lax and van Leer \cite{HartenLaxVanLeer1983} for deriving an approximate Riemann solver $W_\mathcal{RS}\left(\frac{x}{t}; W_L^{(1)}, W_R^{(1)}\right)$ based on the states $W^{(1)}$ after the implicit step.
Due to the linear-degeneracy from Lemma \ref{lem:EW}, the structure of the approximate Riemann solver, as displayed in Figure \ref{fig:Riemann_solution}, is given as follows
\begin{align}
W_\mathcal{RS}\left(\frac{x}{t}; W_L^{(1)}, W_R^{(1)}\right) = 
\begin{cases}
W_L^{(1)} & \frac{x}{t} < \lambda^- ,\\
W^*_L & \lambda^- < \frac{x}{t} < \lambda^u ,\\
W^*_R & \lambda^u < \frac{x}{t} < \lambda^+ ,\\
W_R^{(1)} & \lambda^+ < \frac{x}{t}.
\end{cases} 
\label{eq:Sol_RP}
\end{align}
\begin{figure}[htbp]
	\centering
	\begin{tikzpicture}[scale=2.]
	\begin{scope}[every node/.style={scale=1.}]
	\draw
	(-0.5,0)-- (2.5,0);
	\draw (0,0.85) coordinate (l1) node[above] {$u - a/\rho$} -- (1,0) coordinate (x0) node[below, yshift=-0.5ex] {$x=0$} ;
	\draw (2,0.85) coordinate (l3) node[above right] {$u + a/\rho$} -- (1,0);
	\draw (1.15,0.85) coordinate (l2) node[above,yshift=0.5ex] {$u$} -- (1,0);
	\draw (0.25,0.4) coordinate (WL) node[left] {$W_L$};
	\draw (1.75,0.4) coordinate (WR) node[right] {$W_R$};
	\draw (0.7,0.5) coordinate (WSL) node[above] {$W^*_L$};
	\draw (1.4,0.5) coordinate (WSR) node[above] {$W^*_R$};
	\end{scope}
	\end{tikzpicture}
	\label{fig:Riemann_solution}
	\caption{Structure of the Riemann solution.}
\end{figure}
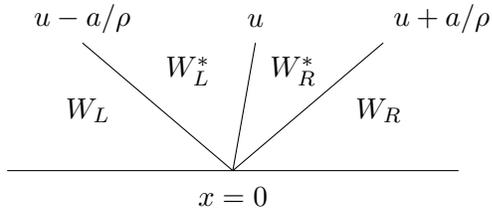
To compute the intermediate states $W_{L,R}^{\ast}$, we use the Riemann invariants as given in Lemma \ref{lem:EW}. 
Note that since the eigenvalues $\lambda^\pm$ have multiplicity 1, we get the expected 5 Riemann invariants.
This does not hold in general for eigenvalues with multiplicity larger than 1. 
Nevertheless, the invariants \eqref{eq:RI_u} and \eqref{eq:RI_pm} give enough relations to determine the solution to a Riemann problem for \eqref{sys:expl} as shown in the following lemma. 
\begin{lemma}
	\label{lem:Sol_RP}
	Consider an initial value problem with initial data $W = W^{(1)}$ given by 
	\begin{align}
	W_0(x) = 
	\begin{cases}
	W_L & x < 0, \\
	W_R & x > 0.
	\end{cases}
	\label{eq:RP}
	\end{align}
	Then the solution consists of four constant states separated by contact discontinuities with the structure given in \eqref{eq:Sol_RP}.
	The solution for the states $W^\ast_{L/R}$ is given by 
	\begin{align}
	\begin{split}
	\frac{1}{\rho^\ast_L} &= \frac{1}{\rho_L} + \frac{1}{a}(u^\ast - u_{L}),\\
	\frac{1}{\rho^\ast_R} &= \frac{1}{\rho_R} + \frac{1}{a}(u_{R}- u^\ast),\\
	u^\ast &= u^\ast_L = u_R^\ast = \frac{1}{2}(u_{L} + u_{R}) + \frac{1}{2a}\left[(\pi_L - \pi_R) + \frac{1-M^2}{M^2} (\psi_L - \psi_R)\right], \\ 
	\pi_{L}^\ast &= \frac{1}{2}(\pi_L + \pi_R) + \frac{a}{2}(u_{L} - u_{R}) - \frac{1-M^2}{M^2}\frac{1}{2}(\psi_L - \psi_R), \\
	\pi_{R}^\ast &= \frac{1}{2}(\pi_L + \pi_R) + \frac{a}{2}(u_{L} - u_{R}) + \frac{1-M^2}{M^2}\frac{1}{2}(\psi_L - \psi_R), \\
	e_L^\ast &= e_L - \frac{1}{2a^2} \left[(\pi_L^2 - (\pi_L^\ast)^2) + (1-M^2)(\pi_L - \pi_L^\ast)\psi_L\right], \\
	e_R^\ast &= e_R - \frac{1}{2a^2} \left[(\pi_R^2 - (\pi_R^\ast)^2) + (1-M^2)(\pi_R - \pi_R^\ast)\psi_R\right], \\
	\psi_{L,R}^\ast &= \psi_{L,R}, \\
	\hat{u}^\ast_{L,R} &= \hat{u}_{L,R}.
	\end{split}
	\label{eq:intermStates}
	\end{align}
\end{lemma}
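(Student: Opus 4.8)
The plan is to use the fact, established in Lemma \ref{lem:EW}, that every characteristic field of \eqref{sys:expl} is linearly degenerate. By the classical theory of Lax for linearly degenerate fields, each wave in the self-similar solution is a contact discontinuity across which the Riemann invariants of the corresponding field stay constant. Together with the ordering $\lambda^- < \lambda^u < \lambda^+$, valid for the explicit system whose eigenvalues carry no Mach-number-dependent splitting, this immediately forces the four-state structure \eqref{eq:Sol_RP}: $W_L$, then $W^*_L$, then $W^*_R$, then $W_R$, separated respectively by the $\lambda^-$, $\lambda^u$ and $\lambda^+$ contacts. The task then reduces to determining the two intermediate states $W^*_L$ and $W^*_R$.

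First I would write down the invariance relations across each of the three contacts, using \eqref{eq:RI_u} and \eqref{eq:RI_pm}. Across the $\lambda^-$ contact (between $W_L$ and $W^*_L$) the five invariants $I_1^-,\dots,I_5^-$ are preserved; across the $\lambda^+$ contact (between $W^*_R$ and $W_R$) the five invariants $I_1^+,\dots,I_5^+$ are preserved; across the central $\lambda^u$ contact (between $W^*_L$ and $W^*_R$) only the two invariants $I_1^u,I_2^u$ are preserved. The delicate bookkeeping point, the one flagged in the text since $\lambda^u$ has multiplicity $4$ and therefore does not supply the generic number of invariants, is the counting: the two intermediate states have $12$ unknown components in total ($\rho,u,e,\pi,\hat u,\psi$ each), while the relations number $5+5+2=12$. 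Thus the system is exactly determined, and the only real question is its solvability and the resulting closed form.

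I would then solve it in the natural order. The passive invariants $I_4^\pm=\hat u$ and $I_5^\pm=\psi$ give at once $\hat u^*_{L}=\hat u_{L}$, $\psi^*_{L}=\psi_{L}$ and $\hat u^*_{R}=\hat u_{R}$, $\psi^*_{R}=\psi_{R}$. The invariant $I_1^u=u$ forces $u^*_L=u^*_R=:u^*$. The three remaining coupling relations, namely $I_2^-=\pi+au$ across $\lambda^-$, $I_2^+=\pi-au$ across $\lambda^+$, and $I_2^u=M^2\pi+(1-M^2)\psi$ across $\lambda^u$ (with $\psi^*_{L,R}$ already known), form a small linear system in the three unknowns $u^*,\pi^*_L,\pi^*_R$. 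Eliminating $\pi^*_L-\pi^*_R$ through $I_2^u$ and feeding it into the two $I_2^\pm$ relations yields the stated expression for $u^*$, and back-substitution gives $\pi^*_L$ and $\pi^*_R$. Finally, $I_1^\pm=u\pm a/\rho$ returns the specific-volume relations for $1/\rho^*_{L,R}$, and the energy invariants $I_3^\pm$, evaluated with $\psi^*_{L,R}=\psi_{L,R}$ already fixed, deliver $e^*_{L,R}$.

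I expect the main obstacle to be organisational rather than conceptual: verifying that the $3\times 3$ linear system for $(u^*,\pi^*_L,\pi^*_R)$ is nonsingular and collapses exactly to the symmetric closed forms in \eqref{eq:intermStates}, and making sure the reduced set of two invariants across the multiple eigenvalue $\lambda^u$ is genuinely sufficient, i.e.\ that no extra jump condition across the central contact is needed to close the system. Once the count $12=12$ is in place and the central invariants are used correctly, the remaining steps are linear substitutions.
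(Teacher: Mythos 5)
Your proposal is correct and follows essentially the same route as the paper: the paper's proof likewise deduces the four-state structure from the linear degeneracy of Lemma \ref{lem:EW} together with the ordering $u-a/\rho<u<u+a/\rho$, and then states that the intermediate states are obtained by solving the system of relations given by the Riemann invariants \eqref{eq:RI_u} and \eqref{eq:RI_pm}. The paper leaves that last step as a one-line remark, so your explicit $12=12$ counting and the order in which you eliminate $(\hat u^*,\psi^*)$, then $(u^*,\pi^*_L,\pi^*_R)$, then $(\rho^*,e^*)$ is simply a worked-out version of the same argument.
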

\begin{proof}
	Since $\frac{a}{\rho} > 0$, we have the following order of the eigenvalues
	$u - \frac{a}{\rho} < u < u + \frac{a}{\rho}$.
	The solution structure follows directly from the linear degeneracy of the eigenvalues given in Lemma \ref{lem:EW} and the ordering of the eigenvalues. 
	To derive the solution for the intermediate states $W^\ast_{L,R}$ one uses the Riemann invariants given in \eqref{eq:RI_u} and \eqref{eq:RI_pm} and solves the resulting system of equations.
\end{proof}

 Given the solution of the Riemann problem \eqref{eq:Sol_RP}, we define the numerical fluxes as follows
 \begin{align}
 	F_{i+1/2} = 
 	\begin{cases}
 		F(W_i^{(1)}) &\lambda^- > 0 \\
 		F(W_i^{\ast,(1)}) & \lambda^u > 0 > \lambda^- \\
 		F(W_{i+1}^{\ast,(1)}) & \lambda^+ > 0 > \lambda^u \\
 		F(W_{i+1}^{(1)}) & \lambda^+ < 0
 	\end{cases}
 	\label{def:F_interface}
 \end{align}
 where the superscript $(1)$ emphasizes that the states after the implicit step are used.
 To avoid interactions between the approximate Riemann solvers at the interfaces $x_{i+1/2}$, we have a CFL restriction on the time step of 
 \begin{equation}
 \Delta t \leq \frac{1}{2} \frac{\Delta x}{ \underset{i}{\max}|u_i \pm a/\rho_i|}
 \label{eq:cfl}
 \end{equation} 
 which is independent of the Mach number.
 This leads to the following update of the explicit part
\begin{equation}
W^{(2)}_i = W^{(1)}_i - \frac{\Delta t}{\Delta x} (F_{i+1/2} - F_{i-1/2}).
\label{eq:update_explicit}
\end{equation}  
 
 At this point, we can make some remarks on the numerical scheme as it is given by \eqref{eq:update_impl},\eqref{eq:update_explicit} and \eqref{eq:t-semi-proj}.
 \begin{remark}
 	\label{rem:expl_upd}
 	The flux function $G$ in \eqref{eq:DefFG} in the implicit part and the relaxation source term $R$ only act on the relaxation variables $(\pi, \mathbf{\hat{u}},\psi)$. Thus, we can immediately give the update of the physical variables $w$ as 
	\begin{equation}
		w^{n+1}_i = w_i^n - \frac{\Delta t}{\Delta x}\left(f\left(QW_\mathcal{RS}\left(0;W_i^{(1)}, W_{i+1}^{(1)}\right)\right) - f\left(QW_\mathcal{RS}\left(0;W_{i-1}^{(1)}, W_{i}^{(1)}\right)\right)\right),
		\label{eq:first_pyhs_upd}
	\end{equation}
	where the matrix $Q$ is the projection on the first $2+d$ components as defined in \eqref{eq:Q}.
 \end{remark}
\begin{remark}\label{rem:uhat_dep}
	We see from \eqref{eq:update_impl} and \eqref{eq:intermStates}, that the updates in the implicit and explicit step are independent of $\mathbf{\hat{u}}$. 
	Therefore it is not necessary to explicitly compute $\mathbf{\hat{u}}$ in the scheme thus reducing the computational costs. 
\end{remark}
\begin{remark}
	\label{rem:RS_M=1}
	For the limit case $M =1$ the relaxation model \eqref{sys:full_relaxation_Euler_LM} reduces to a Suliciu relaxation model for the compressible Euler equations since all $\psi$-terms are being cancelled and the eigenvalues $\lambda^\pm$ and $\lambda_M^\pm$ collapse.
	Analogously, in the scheme the implicit step becomes redundant, since in the Riemann solution \eqref{eq:intermStates} the $\psi$-terms are cancelled, too. Thus the scheme reduces to a Godunov-type approximate Riemann solver based explicit scheme for the compressible Euler equations as can be found in \cite{Bouchut2004}.
	
	For $M>1$ the order of the eigenvalues changes to $\lambda^- < \lambda_M^- < \lambda^u <\lambda_M^+ < \lambda^+$. The scheme is still stable because the CFL condition \eqref{eq:cfl} comes from the explicit eigenvalues which are now the largest ones.
\end{remark}

\subsection{Positivity of density and internal energy}
For physical applications it is necessary that the density and internal energy be positive. 
We define the physical admissible states associated with the Euler equations \eqref{sys:nondimEuler} as
\begin{equation}
\Omega_{phy} = \left\lbrace w \in \mathbb{R}^{d+2}, \rho > 0, e > 0 \right\rbrace.
\end{equation}
The property of our scheme to preserve the domain $\Omega_{phy}$ is linked to how the fluxes are calculated. 
In our case it is essential that the density and internal energy of the Riemann solution \eqref{eq:Sol_RP},\eqref{eq:intermStates} are positive.
	This is shown in the following lemma.	
\begin{lemma}
	\label{lem:RS_pos1}
	Let the initial data of the Riemann problem \eqref{eq:RP} be given as $W_{L,R}^{(1)} \in \Omega_{phy} \cap \Omega_{wp}$ satisfying the boundary conditions \eqref{bc:pressure}. 
	Then there is a relaxation parameter large enough but independent of $M$ such that $W_\mathcal{RS}(\frac{x}{t},W_L^{(1)},W_R^{(1)}) \in \Omega_{phy}$.
\end{lemma}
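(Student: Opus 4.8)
The plan is to work directly from the closed-form intermediate states in \eqref{eq:intermStates}, combine them with the Mach number scaling of the fast pressure established in Lemma \ref{lem:psi_scaling}, and then show that raising the relaxation parameter $a$ above a threshold that does \emph{not} depend on $M$ forces both the intermediate specific volumes and the intermediate internal energies to be positive. The outer states $W_{L,R}^{(1)}$ are in $\Omega_{phy}$ by hypothesis, so only the two star states need to be controlled.

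First I would isolate the only place where $M$ can enter singularly, namely the factor $\frac{1-M^2}{M^2}(\psi_L-\psi_R)$ occurring in $u^\ast,\pi_L^\ast,\pi_R^\ast$ in \eqref{eq:intermStates}. Since $W_{L,R}^{(1)}\in\Omega_{wp}$ satisfy the boundary conditions \eqref{bc:pressure}, Lemma \ref{lem:psi_scaling} gives $\psi^{(1)}=p_0+\mathcal{O}(M^2)$ with a \emph{single} constant $p_0$, so the jump satisfies $\psi_L-\psi_R=\mathcal{O}(M^2)$ and hence the quantity $C:=\frac{1-M^2}{M^2}(\psi_L-\psi_R)$ stays bounded uniformly in $M$. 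All remaining data $\rho_{L,R},u_{L,R},\pi_{L,R},e_{L,R}$ are fixed admissible values, bounded independently of $M$. This observation is what turns the apparently $M$-dependent formulas into $M$-uniform ones.

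For the density I would read off from \eqref{eq:intermStates} that $\frac{1}{\rho_L^\ast}=\frac{1}{\rho_L}-\frac{u_L-u_R}{2a}+\frac{(\pi_L-\pi_R)+C}{2a^2}$, and symmetrically for $\rho_R^\ast$. As $\frac{1}{\rho_L}>0$ is fixed and the two corrections are $\mathcal{O}(1/a)$ with constants bounded uniformly in $M$, there is an $a_1$, independent of $M$, so that $\rho_{L,R}^\ast>0$ for all $a\ge a_1$. The internal energy is the delicate part, because $\pi_L^\ast$ itself grows like $a$ through the term $\frac a2(u_L-u_R)$, so the bracket in the formula for $e_L^\ast$ is not obviously small. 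Here I would exploit the difference-of-squares structure: writing $\pi_L^\ast-\pi_L=\frac a2(u_L-u_R)+D_1$ and $\pi_L^\ast+\pi_L=\frac a2(u_L-u_R)+D_2$ with $D_1,D_2$ bounded uniformly in $M$, the leading $a^2$ terms combine so that $\frac{(\pi_L^\ast)^2-\pi_L^2}{2a^2}=\frac{(u_L-u_R)^2}{8}+\mathcal{O}(1/a)$, while the cross term $\frac{(1-M^2)(\pi_L^\ast-\pi_L)\psi_L}{2a^2}=\mathcal{O}(1/a)$. Hence $e_L^\ast\to e_L+\frac18(u_L-u_R)^2\ge e_L>0$ as $a\to\infty$, uniformly in $M$, and likewise for $e_R^\ast$; so there is an $a_2$, independent of $M$, with $e_{L,R}^\ast>0$ for $a\ge a_2$.

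Finally, I would choose $a\ge\max\{a_1,a_2\}$ and also above the sub-characteristic threshold of Lemma \ref{lem:relax_sys_prob_orig}, which is itself a finite bound independent of $M$; then all four states of the fan \eqref{eq:Sol_RP} lie in $\Omega_{phy}$. The main obstacle is exactly the interplay between the $1/M^2$-singular pressure term and the linear $a$-growth of $\pi^\ast$: the first is tamed by Lemma \ref{lem:psi_scaling}, and the second is handled by the difference-of-squares identity, which reveals that large $a$ is in fact \emph{beneficial} rather than harmful for the internal energy. Establishing that the relevant thresholds are genuinely $M$-uniform is the crux of the argument.
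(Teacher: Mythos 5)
Your proposal is correct and follows essentially the same route as the paper's proof: both arguments hinge on Lemma \ref{lem:psi_scaling} giving $\psi_L-\psi_R=\mathcal{O}(M^2)$ so that the $\tfrac{1-M^2}{M^2}$ factor in \eqref{eq:intermStates} stays bounded, after which $\rho^\ast_{L,R}>0$ and $e^\ast_{L,R}=e_{L,R}+\tfrac18(u_L-u_R)^2+\mathcal{O}(1/a)>0$ for $a$ large enough independently of $M$. The only cosmetic difference is that the paper deduces the positivity of $\rho^\ast_L$ from the wave ordering $u_L-a/\rho_L<u^\ast$, whereas you verify the same fact by direct computation of the $\mathcal{O}(1/a)$ corrections.
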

\begin{proof}
	Since the proof only concerns data after the implicit step, we will drop the superscript $(1)$.
	We have to prove, that $\rho_{L,R}^\ast >0 $ and $e_{L,R}^\ast > 0$.
	From the ordering of eigenvalues $u_L - \frac{a}{\rho_L} < u^\ast$ and the formula for $\rho_L^{\ast}$ from \eqref{eq:intermStates}, we get 
	$$
	\frac{1}{\rho_L^\ast} = \frac{1}{\rho_L} + \frac{u^\ast - u_L}{a} \geq \frac{1}{\rho_L} - \frac{1}{\rho_L} = 0. 
	$$
	Analogously, we find $\rho_R^\ast \geq 0$.
	For the internal energy $e_L^\ast$, we insert the definition of $\pi_L^\ast$ from \eqref{eq:intermStates} into $e_L^\ast$ to obtain a formula depending only on the left and right states $W_{L,R}$
	\begin{align*}
	\begin{split}
	e_L^\ast &= e_L + \frac{1}{8}(u_L - u_R)^2+ \\
	&\frac{1}{2 a^2} \left(-\pi_L^2 + \frac{1}{4}\left(\pi_L + \pi_R + \frac{1-M^2}{M^2}(\psi_R - \psi_L)\right)^2 \right.\\
	& \hspace{1cm}\left.+ \frac{1}{2}\psi_L(1-M^2) \left(\pi_R - \pi_L + \frac{1-M^2}{M^2}(\psi_R - \psi_L)\right)\right) \\
	&-\frac{1}{4a}\left(u_L - u_R\right)\left(\pi_L + \pi_R + \frac{1-M^2}{M^2}(\psi_R - \psi_L) + (1-M^2)\psi_L\right).
	\end{split}
	\end{align*}
	From Lemma \ref{lem:psi_scaling}, we know $\psi_{L,R} = p_0 + \mathcal{O}(M^2)$.
	The difference  $\psi_R - \psi_L = \mathcal{O}(M^2)$ cancels with the $1/M^2$ which leads to $e_L^\ast = \mathcal{O}(1)$. 
	All possibly negative terms in $e_L^\ast$ can then be controlled by the relaxation parameter $a>0$ independent of the Mach number.
	The same argument holds for $e_R^\ast$.
\end{proof} 

The positivity property for the first order scheme is given in the next result.
\begin{theorem}[Positivity property 1]
	\label{theo:Pos_first}
		Let the initial state be given as $$w_i^n \in \Omega = \Omega_{phy} \cap \Omega_{wp}$$ in $d$ dimensions satisfying the boundary condition described in \eqref{bc:pressure}. 
		Then under the Mach number independent CFL condition $\frac{\Delta t}{\Delta x} \underset{i}{\max}|\lambda^\pm(w_i^n)| < \frac{1}{2d}$, the numerical scheme defined by \eqref{eq:update_impl},\eqref{eq:first_pyhs_upd} preserves the positivity of density and internal energy, that is 
		$$
		w_{i}^{n+1} \in \Omega_{phy}.
		$$
\end{theorem}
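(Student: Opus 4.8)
The plan is to prove positivity by the classical Harten--Lax--van Leer / Godunov averaging argument: realize the cell value $w_i^{n+1}$ as a convex combination of the intermediate Riemann states, each of which is admissible by Lemma \ref{lem:RS_pos1}, and conclude by convexity of $\Omega_{phy}$. First I would reduce to one space dimension. For $d>1$ the fully discrete update is a sum of directional flux differences, and the standard device of writing it as $w_i^{n+1}=\frac1d\sum_{k=1}^d w_i^{n+1,(k)}$, where each $w_i^{n+1,(k)}$ is a one-dimensional Godunov update in direction $x_k$ performed with time step $d\,\Delta t$, reduces positivity to the one-dimensional case: the effective Courant number of each sweep is $d\,\frac{\Delta t}{\Delta x}\max_i|\lambda^\pm(w_i^n)|$, and the hypothesis $\frac{\Delta t}{\Delta x}\max_i|\lambda^\pm|<\frac1{2d}$ makes it $<\tfrac12$, i.e. the half-CFL \eqref{eq:cfl}. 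Since $\Omega_{phy}$ is convex, admissibility of each sweep then yields admissibility of the convex combination $w_i^{n+1}$.

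Next I would establish the convexity of $\Omega_{phy}$ in the conservative variables $w=(\rho,\rho\mathbf{u},E)$. The condition $\rho>0$ is a half-space, while for $\rho>0$ the condition $e>0$ is equivalent to $\rho e=E-\tfrac12 M^2|\rho\mathbf{u}|^2/\rho>0$; since $(\rho,\rho\mathbf{u})\mapsto|\rho\mathbf{u}|^2/\rho$ is convex (a quadratic-over-linear perspective function), $\rho e$ is a concave function of $w$ and its positivity set is convex. Hence $\Omega_{phy}$ is the intersection of two convex sets and is itself convex.

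The core step is the averaging identity. Under the non-interaction CFL \eqref{eq:cfl}, the wave fans issuing from the two neighbouring interfaces $x_{i-1/2}$ and $x_{i+1/2}$ do not meet inside $C_i$ during $[t^n,t^{n+1}]$, so on $C_i$ the exact solution of the juxtaposed Riemann problems for \eqref{sys:expl} is piecewise constant, taking the values $W_i^{(1)},W_i^{\ast,(1)},W_{i+1}^{\ast,(1)}$ (together with the symmetric states from the left interface) on subintervals whose lengths are fixed by the wave speeds. Because the Godunov update is the cell average of this solution, $W_i^{(2)}$ is a convex combination of these states with nonnegative weights summing to one; applying the linear projection $Q$ and using Remark \ref{rem:expl_upd} shows that the physical update \eqref{eq:first_pyhs_upd} is the same convex combination of the projected states $QW_i^{(1)},QW_i^{\ast,(1)},QW_{i+1}^{\ast,(1)},\dots$. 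By Lemma \ref{lem:RS_pos1} each of these lies in $\Omega_{phy}$, provided the data feeding every interface Riemann problem lie in $\Omega_{phy}\cap\Omega_{wp}$ and $a$ is chosen large enough independently of $M$; convexity then gives $w_i^{n+1}\in\Omega_{phy}$.

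Finally I would verify that the hypotheses of Lemma \ref{lem:RS_pos1} are genuinely met at every interface, which is where the interplay with the implicit step enters. The flux $G$ and the source $R$ act only on the relaxation variables, and \eqref{eq:time-semi-rho} gives $\rho^{(1)}=\rho^n$, so the implicit step leaves $(\rho,\rho\mathbf{u},E)$ untouched and $W^{(1)}$ inherits the positive density and internal energy of $w^n$; Lemma \ref{lem:psi_scaling} guarantees that $\psi^{(1)}$ stays well-prepared, so $W^{(1)}\in\Omega_{phy}\cap\Omega_{wp}$ under the boundary conditions \eqref{bc:pressure}. I expect the main obstacle to be precisely the bookkeeping that turns \eqref{eq:first_pyhs_upd} into an honest convex combination of projected Riemann states, namely verifying the Harten--Lax--van Leer consistency relation for the projected (physical) update, and then tracking the $M$-uniform choice of $a$ from Lemma \ref{lem:RS_pos1} through the $d$-directional decomposition so that a single relaxation parameter controls all possibly-negative contributions at once.
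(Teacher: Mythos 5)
Your proposal is correct and follows essentially the same route as the paper: the paper also writes the one-dimensional update as the cell average \eqref{eq:update_RS} of the juxtaposed Riemann solvers, invokes Lemma \ref{lem:RS_pos1} for admissibility of each state, and handles $d$ dimensions by dimensional splitting and convexity (citing \cite{ThomannZenkKlingenberg2019} for the details you spell out). Your additional verifications — convexity of $\Omega_{phy}$ in conservative variables, the $\frac{1}{2d}$ bookkeeping, and that the implicit step leaves $(\rho,\rho\mathbf{u},E)$ untouched so the hypotheses of Lemma \ref{lem:RS_pos1} hold — are exactly the details the paper leaves implicit.
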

\begin{proof}
	Due to the construction of the numerical scheme, the update of the physical variables is only done through the explicit step \eqref{eq:first_pyhs_upd}. 
	Therefore we can adopt the proof of Theorem 3 in \cite{ThomannZenkKlingenberg2019} using the positivity of the Riemann solver from Lemma \ref{lem:RS_pos1}.
	The key element is, that we can write the update $w^{n+1}_i$ in one dimension as a convex combination of Riemann solvers which satisfy $W_{\mathcal{RS}} \in \Omega_{phy}$ according to Lemma \ref{lem:RS_pos1}:
	\begin{align}
	\label{eq:update_RS}
	\begin{split}
		w_i^{n+1} =
		\frac{1}{\Delta x} \left(\int_{x_{i-\frac{1}{2}}}^{x_{i}} QW_{\mathcal{RS}}\left(\frac{x}{t^{n+1}},W_{i-1},W_i\right)dx
		+ \int_{x_{i}}^{x_{i+\frac{1}{2}}}Q W_{\mathcal{RS}}\left(\frac{x}{t^{n+1}},W_{i},W_{i+1}\right) dx \right)
	\end{split}
	\end{align}
	Since we are using dimensional splitting, the update in $d$ dimensions can be written as a sum of updates as given in \eqref{eq:update_RS} in each dimension and due to convexity we have $w_i^{n+1} \in \Omega_{phy}$. More details can be found for example in \cite{ThomannZenkKlingenberg2019}.
\end{proof}
	
	\subsection{Mach number independent diffusion}
	Although we are using a Godunov type upwind scheme in the explicit part, our scheme does not suffer from an excessive numerical diffusion as $M$ tends to 0. 
	As we will show in the following this is due to the well-prepared implicitly treated fast pressure $\psi^{(1)}$. 
	In order to do so, we investigate the numerical diffusion vector $\mathcal{D}\in \mathbb{R}^{2+d}$ defined by 
	\begin{align}
	\label{eq:diff_fast}
		\mathcal{D} = \frac{f(w_{i}^n) + f(w_{i+1}^n)}{2} - f\left(QW_\mathcal{RS}\left(W_i^{(1)}, W_{i+1}^{(1)}\right)\right),
	\end{align}
	where the Matrix $Q$ is defined in \eqref{eq:Q}.
	Given well-prepared initial data $w^n_i \in \Omega_{wp}$, we have the following Mach number expansion for the physical variables as given in \eqref{eq:well_prep_data_rho},\eqref{eq:well_prep_data_u} and \eqref{eq:well_prep_data_p}
	\begin{align}
	\begin{array}{clccl}
		\rho_i &= ~~\rho_0 + \mathcal{O}(M), &\hspace{2cm}&\rho_{i+1} &= ~~\rho_0 + \mathcal{O}(M) \\
		u_i &= ~~u_{0,i} + \mathcal{O}(M), &&u_{i+1} &= ~~u_{0,{i+1}} + \mathcal{O}(M) \\
		e_i &= ~~e_0 + \mathcal{O}(M), &&e_{i+1} &= ~~e_0 + \mathcal{O}(M) \\ 
		\pi_i &= ~~p_0 + \mathcal{O}(M^2), &&\pi_{i+1} &= ~~p_0 + \mathcal{O}(M^2)
	\end{array}
	\label{eq:scaling_diff_n}
	\end{align}
	From Lemma \ref{lem:psi_scaling}, we have for $\psi$ after the implicit step 
	\begin{align}
		\psi_i &= p_0 + \mathcal{O}(M^2), & \psi_{i+1} &= p_0 + \mathcal{O}(M^2).
		\label{eq:scaling_diff_impl}
	\end{align}
	The Mach number expansion of the states $W^{(1)}_{i},W^{(1)}_{i+1}$ used in the Riemann solver $W_{\mathcal{RS}}$ is composed of the expansions \eqref{eq:scaling_diff_n} and \eqref{eq:scaling_diff_impl}.
	Inserting them into the formulas of the intermediate states \eqref{eq:intermStates} of the Riemann solution \eqref{eq:Sol_RP}, we have the following scaling of $W_i^\ast, W_{i+1}^\ast$ with respect to the Mach number
	\begin{align}
	\begin{array}{clccl}
		\tau_{i}^\ast &= ~~\tau_0 + \mathcal{O}(1), &\hspace{0.5cm}& \tau_{i+1}^\ast &=~~ \tau_0 + \mathcal{O}(1)  \\
		u_{i+1/2}^\ast &=~~ \frac{u_{0,i} + u_{0,i+1}}{2} + \mathcal{O}(1) = u_{0,i+1/2} +  \mathcal{O}(1) && &\\
		e_{i}^\ast &=~~ e_0 + \mathcal{O}(1),&&e_{i+1}^\ast &= ~~e_0 + \mathcal{O}(1)\\
		\pi_{i}^\ast &=~~ p_0 + \mathcal{O}(1),&& \pi_{i+1}^\ast &=~~ p_0 + \mathcal{O}(1),\\
		\psi_{i}^\ast &=~~ p_0 + \mathcal{O}(M^2),&& \psi_{i+1}^\ast &=~~ p_0 + \mathcal{O}(M^2).
	\end{array}
	\label{eq:expand_M_interm}
	\end{align}
	From \eqref{eq:expand_M_interm} it is evident that the lowest order of $M$ in the intermediate states is $\mathcal{O}(M^0)$.
	Inserting \eqref{eq:expand_M_interm} in the interface flux gives
	\begin{align}
	\label{eq:diff_f_interm}
	f\left(QW_\mathcal{RS}\left(W_i^{(1)}, W_{i+1}^{(1)}\right)\right) =
	\begin{pmatrix}
	\rho_0 u_{0,i+1/2} + \mathcal{O}(1)\\
	\rho_0 u_{0,i+1/2}^2 + \frac{p_0}{M^2}+\mathcal{O}(1)\\
	u_{0,i+1/2} (E_0 + p_0) + \mathcal{O}(1)	
	\end{pmatrix}.
	\end{align}
	Therefore, using \eqref{eq:diff_f_interm} in \eqref{eq:diff_fast}, the diffusion vector is given by 
	\begin{align*}
	\mathcal{D} = 
	\begin{pmatrix}
	\mathcal{O}(1)\\
	\mathcal{O}(1)\\
	\mathcal{O}(1)
	\end{pmatrix}.
	\end{align*}
	This shows that the diffusion introduced by the Riemann solver does not suffer from a $\mathcal{O}(M^{-1})$ dependent diffusion in the momentum equation.

\section{Second order extension}
\label{sec:Second}
In this section we extend the first order scheme to second order accuracy. 
We seek a natural extension of the first order scheme that preserves the positivity property as shown in Theorem \ref{theo:Pos_first}.  
\subsection{Second order time integration scheme}
For the second order extension, we use a modified two stage time integration as can be found in \cite{Berthon2005}. 
It is a convex combination of first order steps and is given by
\begin{align}
\begin{split}
\overline{W}^{(1)} =& W^{\text{eq},n} +\frac{ \Delta t}{M^2} \nabla \cdot G(\overline{W}^{(1)}),\\
\overline{W}^{(2)} =&\overline{W}^{(1)} + \Delta t \nabla \cdot F(\overline{W}^{(1)}),\\
\overline{\overline{W}}^{(1)} =& \overline{W}^{\text{eq},(2)} + \frac{\Delta t}{M^2} \nabla \cdot G(\overline{\overline{W}}^{(1)}),\\
\overline{\overline{W}}^{(2)} =& \overline{\overline{W}}^{(1)} + \Delta t \nabla \cdot  F(\overline{\overline{W}}^{(1)}),\\
W^{n+1} =& \frac{1}{2} \overline{\overline{W}}^{\text{eq},(2)} + \frac{1}{2} W^{\text{eq},n}.
\end{split}
\label{sys:Time_int2}
\end{align}
The relaxation equilibrium states $W^{\text{eq},n}$ and $W^{\text{eq},(2)}$ are defined as in \eqref{eq:relax_equ}.
As in the first order case, we can rewrite the integration scheme \eqref{sys:Time_int2} in terms of the update for the physical variables given in Remark \ref{rem:expl_upd}. 
The first order time update \eqref{eq:first_pyhs_upd} can be written as 
\begin{equation*}
w^{n+1} = w^n - \Delta t \nabla \cdot f(QW^{(1)}),
\end{equation*}
where $Q$ is defined in \eqref{eq:Q}.
Based on this we can reformulate \eqref{sys:Time_int2} in terms of updates of the physical variables as
\begin{align}
\begin{split}
\overline{w} =& w^n - \Delta t \nabla \cdot f(Q\overline{W}^{(1)}) \\
\overline{\overline{w}} =& \overline{w} -\Delta t \nabla \cdot f(Q\overline{\overline{W}}^{(1)})\\
w^{n+1} &= \frac{1}{2}w^n + \frac{1}{2}\overline{\overline{w}}.
\end{split}
\label{sys:Time_int2_phys}
\end{align}
where we obtain $W^{(1)}$ and $\overline{W}^{(1)}$ from the implicit step \eqref{eq:t-semi-impl} using the update formula \eqref{eq:update_impl} to compute $\psi^{(1)}$ and $\overline{\psi}^{(1)}$.
That the time integration scheme is second order accurate is numerically validated in Section \ref{sec:NumRes}.
The time integration scheme \eqref{sys:Time_int2_phys} can be extended to variable step sizes $\Delta t_1, \Delta t_2$ for each stage respectively as given in \cite{Berthon2005}.
This has the advantage that the CFL criterion can be met at each stage independently.
It is given by
\begin{align}
\begin{split}
\overline{w} =& w^n - \Delta t_1 \nabla \cdot f(Q\overline{W}^{(1)}) \\
\overline{\overline{w}} =& \overline{w} -\Delta t_2 \nabla \cdot f(Q\overline{\overline{W}}^{(1)})\\
w^{n+1} &= \left(1 -\frac{2\Delta t_1 \Delta t_2}{\Delta t_1 + \Delta t_2}\right)w^n + \frac{2\Delta t_1 \Delta t_2}{\Delta t_1 + \Delta t_2}\overline{\overline{w}}.
\end{split}
\label{sys:Time_int2_phys_var_step}
\end{align}

\subsection{Second order space reconstruction} 
Following Remark \ref{rem:expl_upd}, we focus on the update of the physical variables in the explicit step. 
As it is standard in the finite volume setting, we apply a reconstruction to get a higher accuracy for the interface values $w_{i+1/2}$.
To get second order, we consider piecewise linear functions in the conserved variables $w = (\rho, \rho \mathbf{u}, E)$ at time level $t^n$ and in $\psi^{(1)}$. 
As we are working on a Cartesian grid, we reconstruct along each dimension separately. 
In one dimension the linear function in $(w_i,x_i)$ on $(x_{i-1/2},x_{i1/2})$ is given by 
\begin{equation*}
	w^n(x) = w_i^n + \sigma_i(x - x_i),
\end{equation*}
where $\sigma_i = (\sigma_i^\rho, \sigma_i^{\rho u}, \sigma_i^{E})$.
For the multi-dimensional notation, we refer to \cite{ThomannZenkKlingenberg2019}.
The slopes $\sigma_i$ are computed from the neighbouring cells using a limiter function. 
To have a seconder order extension that preserves the positivity properties of the first order scheme, see Theorem \ref{theo:Pos_first}, we choose the minmod limiter defined as
\begin{equation*}
	\text{minmod}(x,y) = 
	\begin{cases}
	\min(x,y) &\text{if } x,y \geq 0 \\
	\max(x,y) &\text{if } x,y \leq 0 \\
	0 &\text{else} 
	\end{cases}
\end{equation*} 
and define the slopes as
\begin{equation}
	\sigma_i = \text{minmod}(\frac{w_i^n - w_{i-1}^n}{\Delta x}, \frac{w_{i+1}^n - w_i^n}{\Delta x}).
\end{equation}
The same procedure is applied on $\psi_i^{(1)}$. 
Since we are using the minmod limiter on the conservative variables to determine the slope, we immediately get for $w_i^n \in \Omega_{phy} \cap \Omega_{wp}$ that for the interface values holds
\begin{equation*}
	w_{i-1/2}^+, w_{i+1/2}^- \in \Omega_{phy} \cap \Omega_{wp}
\end{equation*}
and the expansion of the interface values $\psi_{i-1/2}^{(1),+},\psi_{i+1/2}^{(1),-}$ with respect to the Mach number is preserved.
This means by Lemma \ref{lem:RS_pos1} that the Riemann problem applied on the interface values $w_{i+1/2}^{\mp},\psi_{i+1/2}^\mp$ still ensures the positivity of density and internal energy.
By Theorem \ref{theo:Pos_first} the first order scheme has the positivity property and the second order scheme \eqref{sys:Time_int2_phys} is a convex combination of states in $\Omega_{phy}$.
From this we have $w_i^{n+1} \in \Omega_{phy}$.
We have thus proven the following result for the second order scheme:
\begin{theorem}[Positivity property 2]
\label{theo:Pos_second}
Let the initial state be given as $$w_i^n \in \Omega_{phy} \cap \Omega_{wp}$$ with the boundary condition described in \eqref{bc:pressure}. 
Then under the Mach number independent CFL condition $\frac{\Delta t}{\Delta x} \underset{i}{\max}|\lambda^\pm(w_i^n)| < \frac{1}{2 \cdot 2d}$, the numerical scheme defined by \eqref{sys:Time_int2}/\eqref{sys:Time_int2_phys} preserves the positivity of density and internal energy, that is 
$$
	w_{i}^{n+1} \in \Omega_{wp}.
$$
\end{theorem}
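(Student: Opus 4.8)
The plan is to reduce the positivity of the two–stage scheme \eqref{sys:Time_int2_phys} to the positivity of its first order building blocks, already established in Theorem \ref{theo:Pos_first}, by exploiting the convex structure of both the time integrator and of the admissible set. First I would record that $\Omega_{phy}$ is convex in the conserved variables: the constraint $\rho>0$ is linear, and the internal energy density $\rho e = E-\tfrac12 M^2|\rho\mathbf{u}|^2/\rho$ is a concave function of $(\rho,\rho\mathbf{u},E)$ on $\{\rho>0\}$, so $\{\rho e>0\}$ is a convex superlevel set; the linear conditions defining $\Omega_{wp}$ are preserved under convex combinations as well. This convexity is the hinge that makes both the reconstruction and the final averaging in \eqref{sys:Time_int2_phys} compatible with positivity.

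The first substantive step concerns the spatial reconstruction. Since the minmod slopes are computed componentwise on the conservative variables, each reconstructed interface value $w_{i+1/2}^-$, $w_{i-1/2}^+$ is, in the monotone case, a convex combination of $w_i^n$ and one neighbour $w_{i\pm1}^n$, while at a local extremum the slope vanishes and the reconstruction is piecewise constant. Hence $w_{i+1/2}^-,w_{i-1/2}^+\in\Omega_{phy}\cap\Omega_{wp}$ whenever the cell averages are, and the $\mathcal{O}(M^2)$ deviation of $\psi^{(1)}$ from $p_0$ provided by Lemma \ref{lem:psi_scaling} is inherited by the reconstructed $\psi^{(1)}_{i\pm1/2}$. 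With these well-prepared reconstructed data, Lemma \ref{lem:RS_pos1} applies at each interface and guarantees that the Riemann solutions feeding the explicit update lie in $\Omega_{phy}$.

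Next I would treat each of the two explicit sub-steps $w^n\mapsto\overline{w}$ and $\overline{w}\mapsto\overline{\overline{w}}$ exactly as in the proof of Theorem \ref{theo:Pos_first}: writing the half-cell integral representation \eqref{eq:update_RS} built on the reconstructed states, each updated cell average is a convex combination of Riemann-solver values, all of which sit in the convex set $\Omega_{phy}$, so $\overline{w}_i,\overline{\overline{w}}_i\in\Omega_{phy}$. The reconstruction forces the two Riemann fans issued from the two interfaces of a cell to be resolved on half-cells, which is precisely why the admissible CFL constant is halved to $\tfrac{1}{2\cdot 2d}$ relative to the first order bound $\tfrac{1}{2d}$, the factor $d$ accounting for the dimensional splitting. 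Finally, since $w_i^{n+1}=\tfrac12 w_i^n+\tfrac12\overline{\overline{w}}_i$ is a convex combination of two states in $\Omega_{phy}$, convexity yields $w_i^{n+1}\in\Omega_{phy}$, which is the claim (the conclusion being positivity of density and internal energy).

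The main obstacle I anticipate is the bookkeeping of the CFL constant: one must verify carefully that, after reconstruction, the non-interaction of neighbouring Riemann fans together with the half-cell convex decomposition genuinely produces the extra factor $\tfrac12$, and that the Mach-number scaling required by Lemma \ref{lem:RS_pos1} survives the reconstruction uniformly in $M$. The convexity of $\Omega_{phy}$ and the componentwise minmod property are routine once stated, but they are the conceptual core of the argument and should be justified explicitly before the convex-combination chain is invoked.
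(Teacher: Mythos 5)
Your proposal follows essentially the same route as the paper: minmod reconstruction on the conservative variables keeps the interface values in $\Omega_{phy}\cap\Omega_{wp}$ and preserves the Mach-number expansion of $\psi^{(1)}$, so Lemma \ref{lem:RS_pos1} applies at each interface; each explicit stage is then a first-order update handled exactly as in Theorem \ref{theo:Pos_first}; and the final step $w^{n+1}=\tfrac12 w^n+\tfrac12\overline{\overline{w}}$ is a convex combination of admissible states. You actually supply more detail than the paper's own (very terse) argument, in particular the explicit convexity of $\Omega_{phy}$ via concavity of $\rho e$ in the conserved variables and the origin of the extra factor $\tfrac12$ in the CFL condition.
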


\section{Numerical results}
\label{sec:NumRes}
In the following section, we numerically validate the theoretical properties of the proposed scheme. 
Throughout the test cases, we assume an ideal gas law with the equation of state given by
\begin{align*}
	p = (\gamma - 1) \rho e.
\end{align*}
For solving the implicit non-symmetric linear system given by \eqref{eq:update_impl}, we use the GMRES algorithm combined with a Jacobi diagonal scaling preconditioner for periodic boundary conditions used in the test cases in Section 7.2 and an preconditioner based on an incomplete LU decomposition for von Neumann boundary conditions used in the test cases from Section 7.1.
\subsection{Shock test cases}
To verify that our scheme captures discontinuities accurately, we perform a SOD shock tube test \cite{Sod1978} in the regime $M=1$ and a Mach number dependent shock test case taken from \cite{AbbateIolloPuppo2017}. 
\subsubsection{SOD shock tube test}
The computational domain for the SOD shock tube test \cite{Sod1978} is $[0,1]$ and the initial data is given using  $\gamma = 1.4$ by 
\begin{align*}
\arraycolsep=1.4pt\def\arraystretch{1.5}
\begin{array}{clccl}
	\rho_L &= 1 ~\frac{kg}{m^3},&\hspace{1cm}&\rho_R &= 0.125 ~\frac{kg}{m^3}, \\
	u_L &= 0 ~\frac{m}{s}, && u_R &= 0 ~\frac{m}{s}, \\
	p_L &= 1 ~\frac{kg}{m s^2}, &&p_R &= 0.1 ~\frac{kg}{m s^2},
	\end{array}
\end{align*}
where we place the initial discontinuity at $x = 0.5$. 
Since the regime is compressible, we set $M=1$ and the initial data is given in dimensional form.
This test also demonstrates that the collapse of the eigenvalues $\lambda^\pm$ and $\lambda_M^\pm$ in the case of $M=1$ as mentioned in Remark \ref{rem:RS_M=1} is not problematic.
We see in Figure \ref{fig:SOD} that the first order as well as the second order scheme captures the shock positions correctly.
As expected, the second order scheme is more accurate than the first order scheme.

\begin{figure}[htpb]
	\centering
	\includegraphics[scale=0.34]{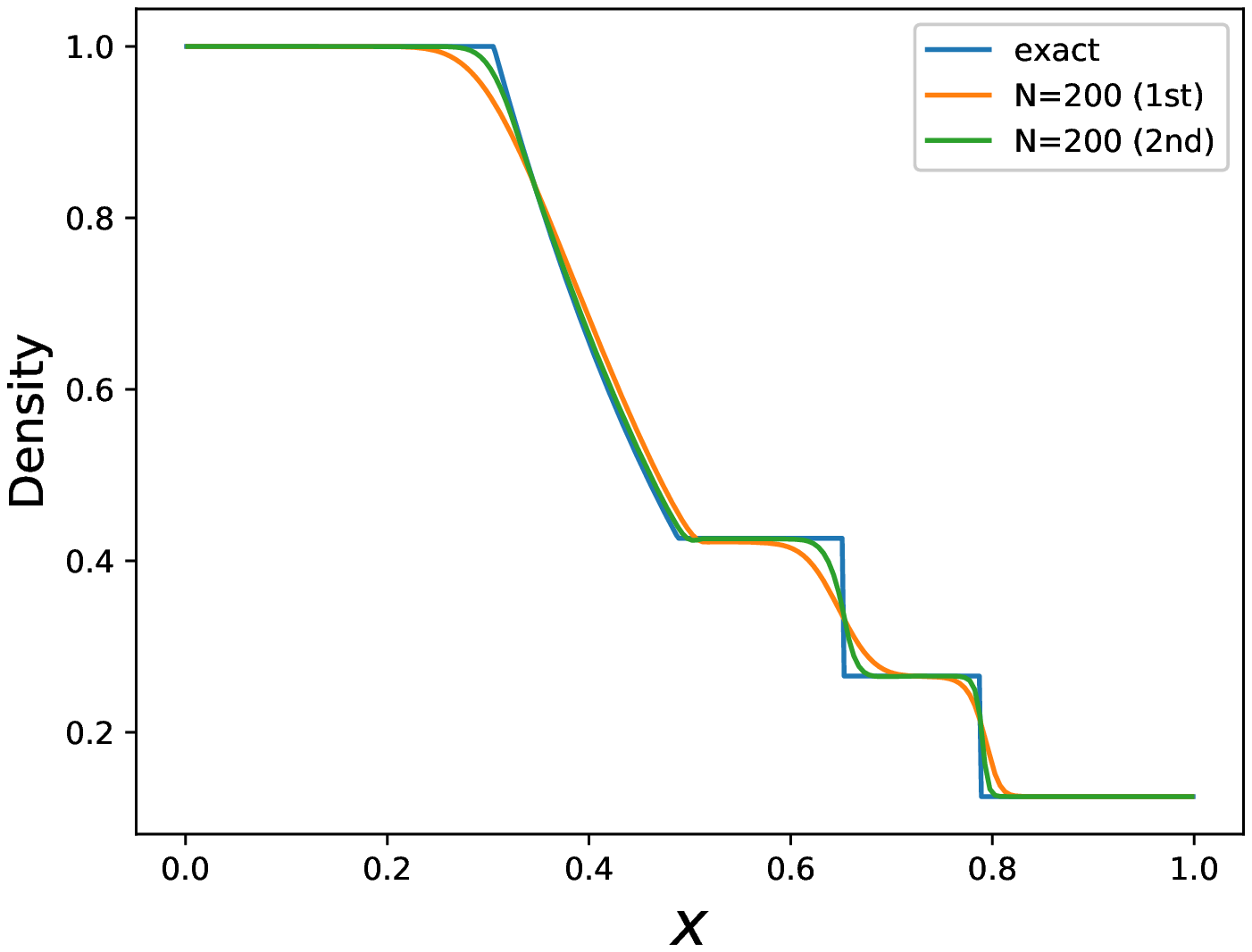}\hskip-4mm
	\includegraphics[scale=0.34]{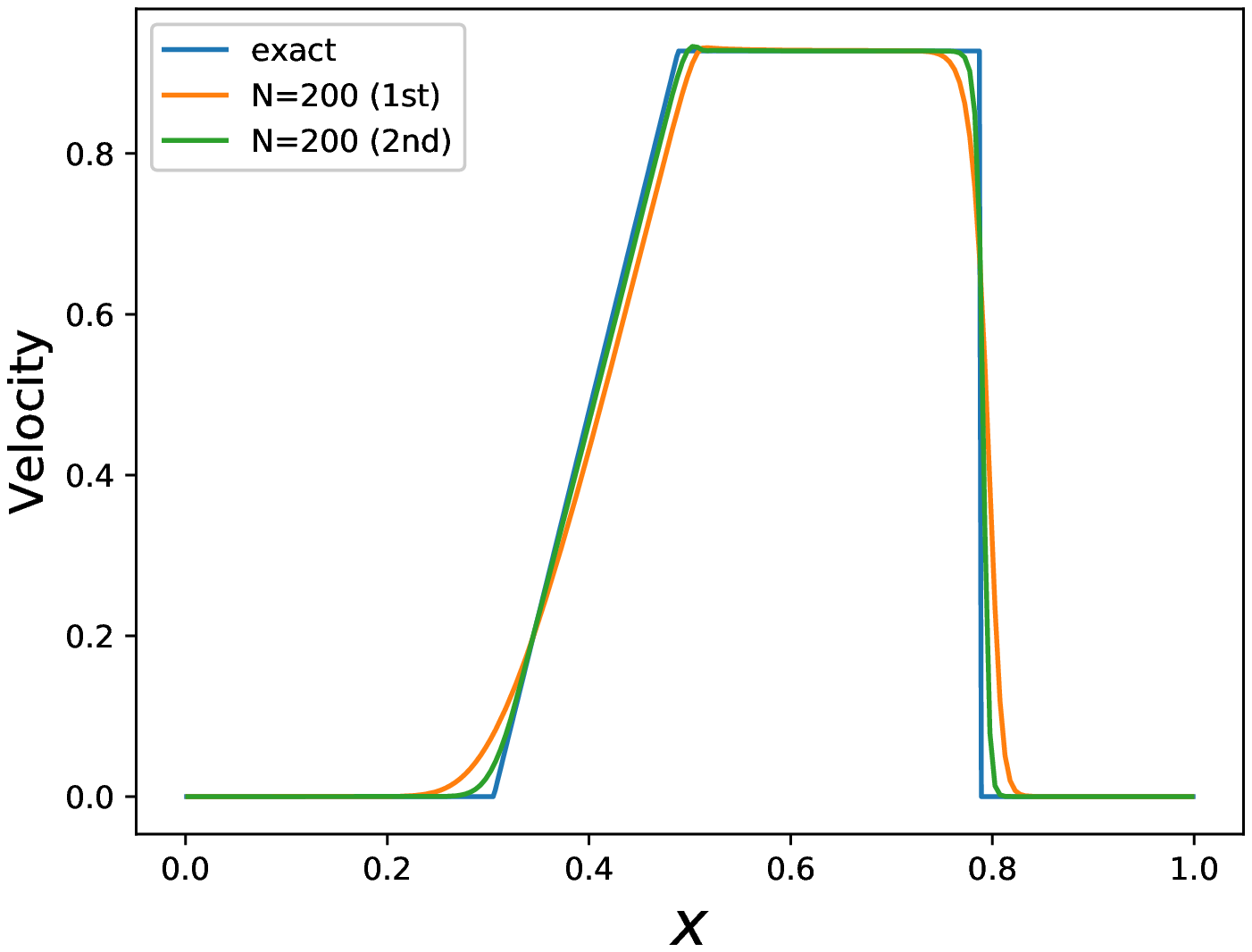}\hskip-4mm
	\includegraphics[scale=0.34]{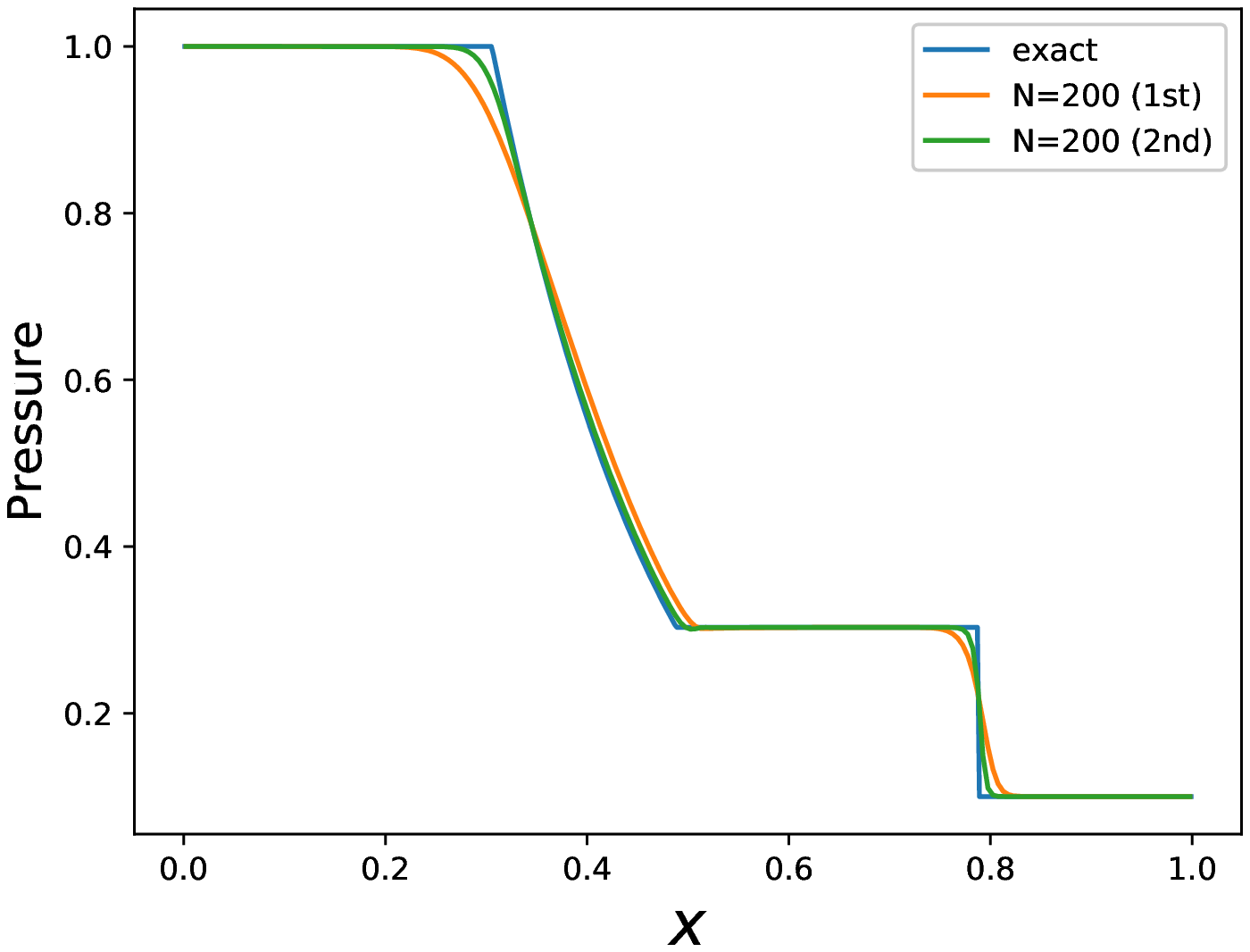}
	\caption{SOD: First order scheme, $T = 0.1644, \gamma = 1.4$}
	\label{fig:SOD}
\end{figure}

\subsubsection{Mach number dependent shock problem}
The following test case is a low Mach flow, where the contact wave travels with the Mach number $M=6.2\cdot 10^{-3}$. 
The initial data is taken from \cite{AbbateIolloPuppo2017} and is given as
\begin{align}
\label{init:Shock2}
\arraycolsep=1.4pt\def\arraystretch{1.5}
\begin{array}{clccl}
\rho_L &= 1 ~\frac{kg}{m^3},&\hspace{1cm}&\rho_R &= 1 ~\frac{kg}{m^3}, \\
u_L &= 0 ~\frac{m}{s}, && u_R &= 0.008 ~\frac{m}{s}, \\
p_L &= 0.4 ~\frac{kg}{m s^2}, &&p_R &= 0.399 ~\frac{kg}{m s^2}.
\end{array}
\end{align}
The discontinuity is placed at $x_0 = 0.5$ on the domain $[0,1]$ with $\gamma = 1.4$ and the final time is given by $T = 0.25s$.
To transform the initial data \eqref{init:Shock2} into non-dimensional quantities, we define for a variable $\phi$ the relation $\phi = \phi_r ~\hat \phi$, where $\phi$ denotes the dimensional variable, $\phi_r$ the reference value which contains the units and $\hat \phi$ the non-dimensional quantity.
For the reference values we have the following relations
\begin{equation}
	t_r = \frac{x_r}{t_r}, ~ p_r = \rho_r c_r^2, ~ c_r = \frac{u_r}{M}.
\end{equation} 
We choose the scaling in space to be $x_r = 1 m$, the scaling in density to be $\rho_r = 1 \frac{kg}{m^3}$ and the scaling in velocity to be $u_r = M \frac{m}{s}$.
This yields the following scaling of the sound speed $c_r = 1 \frac{m}{s}$, the time $t_r = M s$ and the pressure $p_r = 1 \frac{kg}{m s^2}$. 
Then the non-dimensional initial data is given as 
\begin{align}
\label{init:Shock2_nondim}
\arraycolsep=1.4pt\def\arraystretch{1.5}
\begin{array}{clccl}
\rho_L &= 1,&\hspace{1cm}&\rho_R &= 1, \\
u_L &= 0, && u_R &= 0.008/M, \\
p_L &= 0.4, &&p_R &= 0.399.
\end{array}
\end{align}
In the simulation, we choose $M = 6.2\cdot 10^{-3}$ which is the Mach number on the contact wave.

In Figure \ref{fig:SOD2} we show the influence of the space and time step on the density profile computed with the first order scheme (IMEX1) and the second order scheme (IMEX2). 
The contact wave is always reproduced sharply whereas the acoustic waves are smoothed since the time step is chosen according to the CFL restriction \eqref{eq:cfl} associated with the contact wave.

In Figure \ref{fig:SOD2_Compare} our results computed with IMEX1 and IMEX2 are plotted against the implicit Jin Xin relaxation scheme presented in \cite{AbbateIolloPuppo2017} and an explicit upwind scheme. 
Our results obtained with the IMEX schemes are in good agreement with the results of the implicit scheme. 
Since the time step for the explicit upwind scheme depends on the Mach number dependent acoustic waves, all waves are resolved. 
This is rather costly since it results in a very small time step.
The implicit scheme is unconditionally stable and the time step can be chosen with respect to the desired accuracy of the numerical solution.
\begin{figure}[htpb]
	\centering
	\includegraphics[scale=0.6]{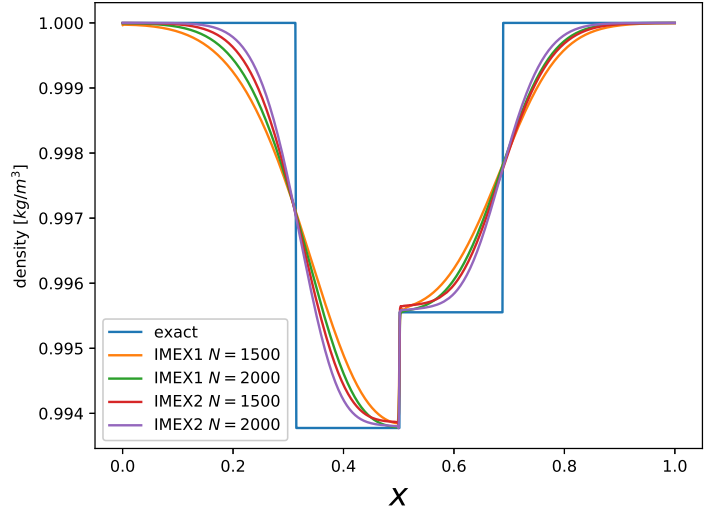}\hskip-1.5mm
	\caption{Density profile for the Mach number dependent test case with different number of grid point and time steps. Time steps for IMEX1 $\Delta t = 7.8 \cdot 10^{-3}s$ (blue), $\Delta t = 5.8 \cdot 10^{-3}s$ (orange), time steps for IMEX2 $\Delta t = 3.9 \cdot 10^{-3}s$ (green), $\Delta t = 2.9 \cdot 10^{-3}s$ (red)}
	\label{fig:SOD2}
\end{figure}
\begin{figure}[htpb]
	\centering
	\includegraphics[scale=0.33]{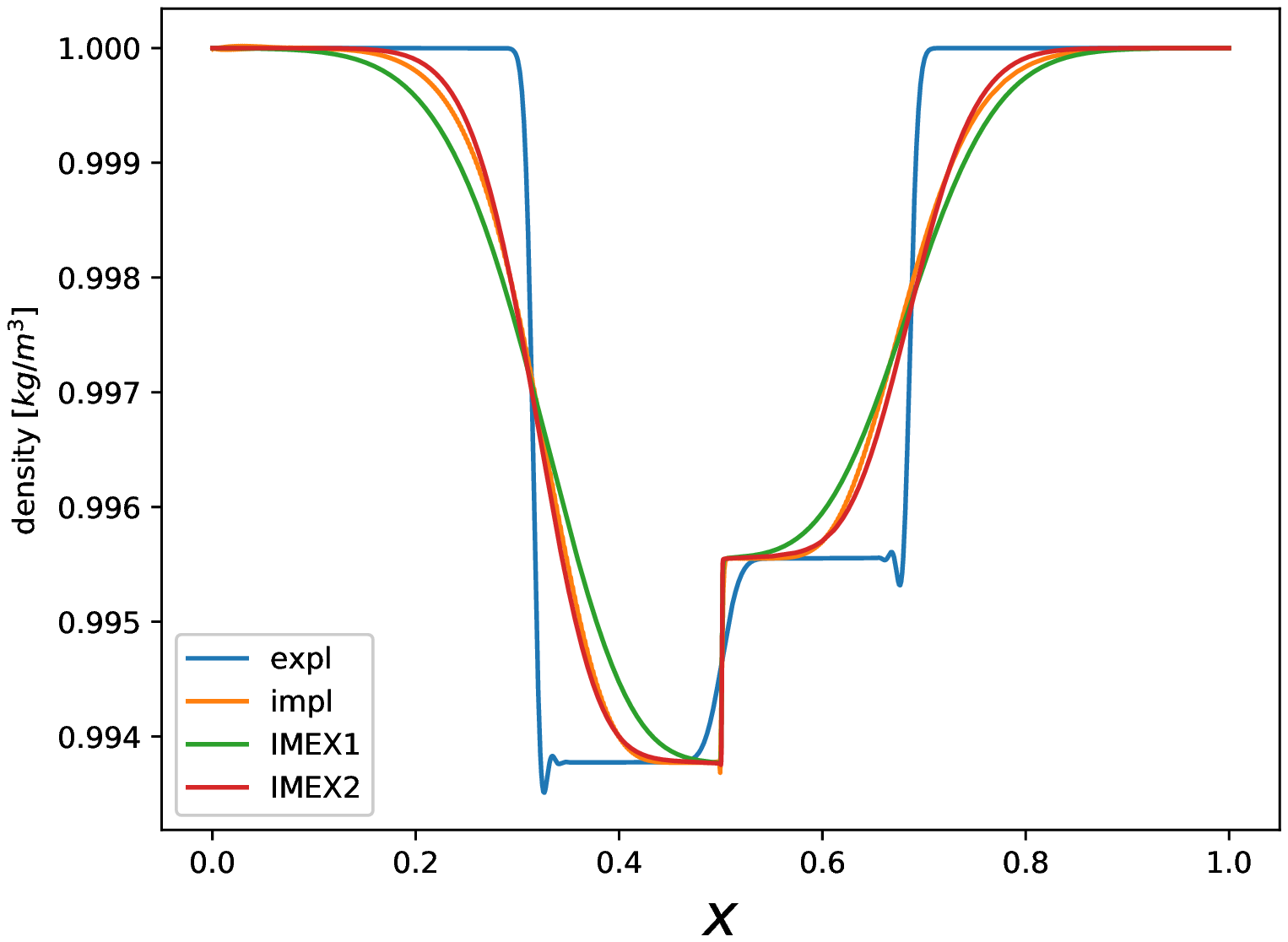}\hskip-1.5mm
	\includegraphics[scale=0.33]{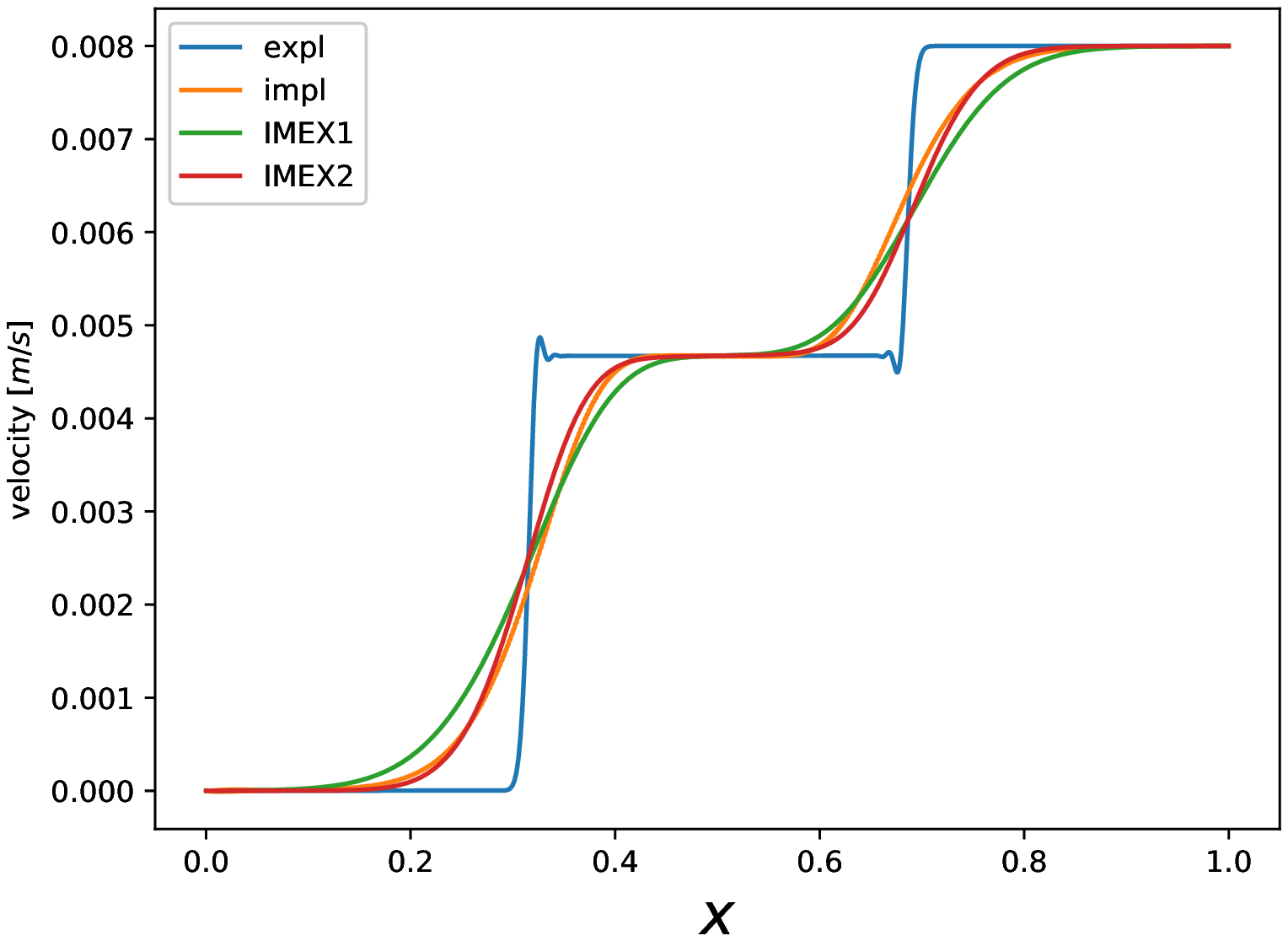}\hskip-1.5mm
	\includegraphics[scale=0.33]{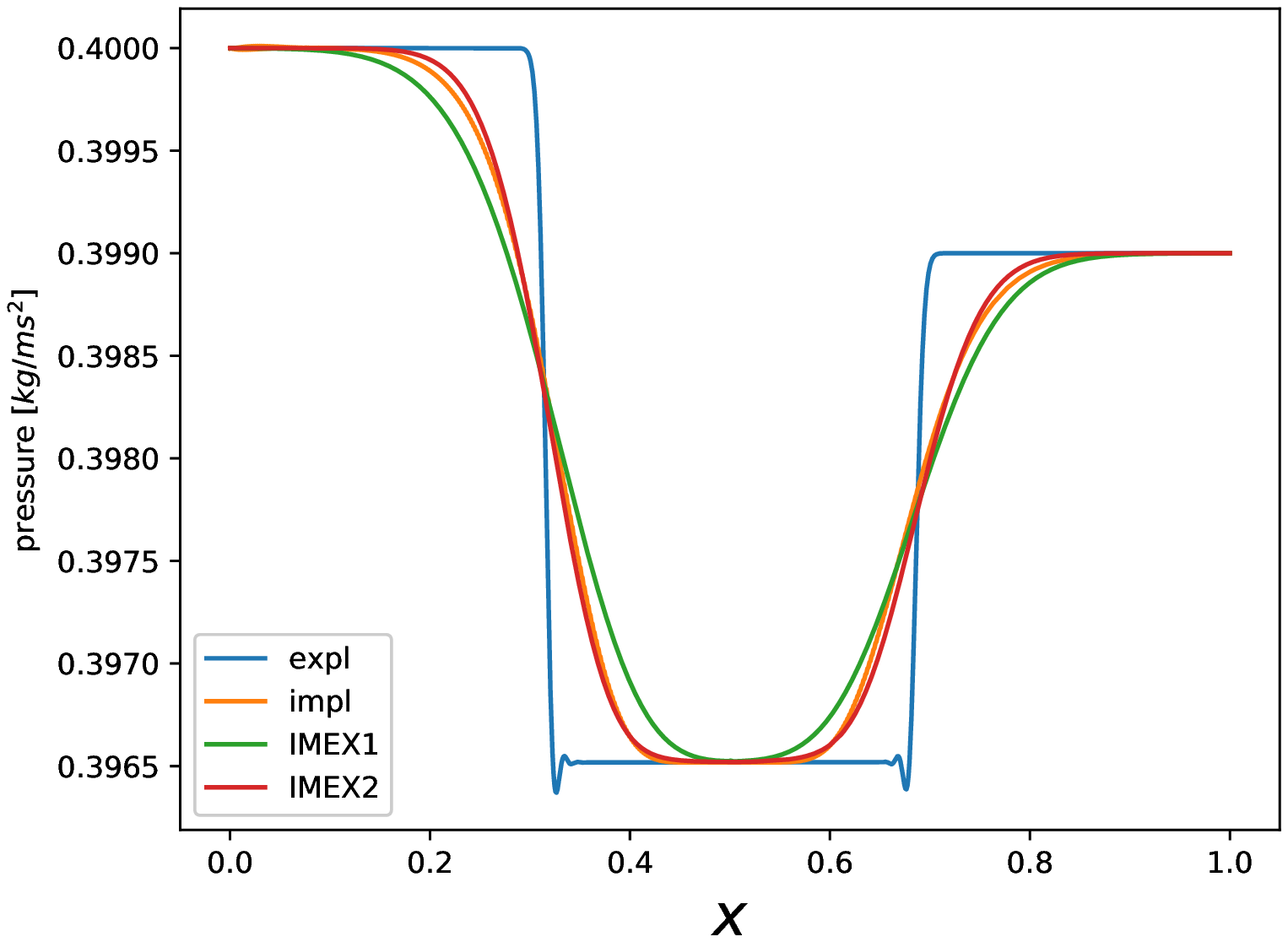}
	\caption{Mach dependent shock test case: Time steps are given by $\Delta t = 5.3 \cdot 10^{-4}s$ (expl.), $\Delta t = 2.2 \cdot 10^{-2}s$ (impl.), $\Delta t = 5.0 \cdot 10^{-3}s$ (IMEX1), $\Delta t = 2.5 \cdot 10^{-3}s$ (IMEX2).}
	\label{fig:SOD2_Compare}
\end{figure}

\subsection{Gresho Vortex test}
In order to demonstrate the low Mach properties, we calculate the solution to the Gresho Vortex test as given in \cite{MiczekRoepkeEdelmann2015}. 
The Gresho vortex is a steady state solution of the compressible Euler equations and the velocity field is divergence free.

The initial angular velocity in $\frac{m}{s}$ is given by 
\begin{align*}
u_\phi &= 
\begin{cases}
5r &\text{ for } 0 \leq r < 0.2\\
2 - 5r &\text{ for } 0.2 \leq r < 0.4\\
0 &\text{ for } 0.4 \leq r
\end{cases},
\end{align*}
where $r = \sqrt{(x-x_0)^2 + (y - y_0)^2}$ with $x_0 = 0.5, 
y_0 = 0.5$ on a computational domain of $[0,1]$ and $\gamma = 5/3$.
The pressure distribution in $\frac{kg}{ms^2}$ is given by 
\begin{align*} 
p &=
\begin{cases}
p_0 + 12.5 r^2 &\text{ for } 0 \leq r < 0.2\\
p_0 + 12.5 r^2 + 4(1 - 5r - \log(0.2) + \log(r)) &\text{ for }0.2 \leq r < 0.4 \\
p_0 - 2 + 4 \log(2) &\text{ for } 0.4 \leq r
\end{cases}\\
\end{align*}
with $p_0 = \frac{\rho_0 u_{\Phi,\max}^2}{\gamma M^2} \frac{kg}{m s^2}$, where $\rho_0 = 1$. 
The initial density is given by $\rho = 1\frac{kg}{m^3}$ and we transform the initial condition in non-dimensional quantities by using $x_r = 1 m$, $\rho_{r} = 1 \frac{kg}{m^3}$,  $u_{r} = 2 \cdot 0.2~\pi\frac{m}{s}$, $p_{r} = \frac{\rho_0 u_0^2}{\gamma M^2} \frac{kg}{m s^2}$ and $t_r=1\frac{m}{u_r}$. 
The computational domain is given by $[0,1]\times[0,1]$ and we use a $40\times40$ grid with periodic boundary conditions.
The results for a full turn of the vortex using the IMEX2 scheme together with the initial distribution of the Mach number are given in Figure \ref{fig:Gresho}.
We see that even for a low resolution, the solution at $T=1$ shows only little dissipation throughout all tested Mach numbers. 
To further check the quality of the numerical simulation, we monitor the loss of kinetic energy. 
Since the Gresho vortex is a stationary solution, the kinetic energy should be preserved. 
In Figure \ref{fig:Ekin} we show the ratio between the initial kinetic energy $E_{kin,0}$ and the kinetic energy after each time step $E_{kin,t}$ for the Mach numbers $M=10^{-2}, 10^{-3}$.
The graphs for the different Mach numbers are indistinguishable which shows that the loss of kinetic energy does not depend on the chosen Mach number but depends on the chosen space discretization and time-step.

\begin{figure}[htpb]
	\centering
	\includegraphics[scale=0.4]{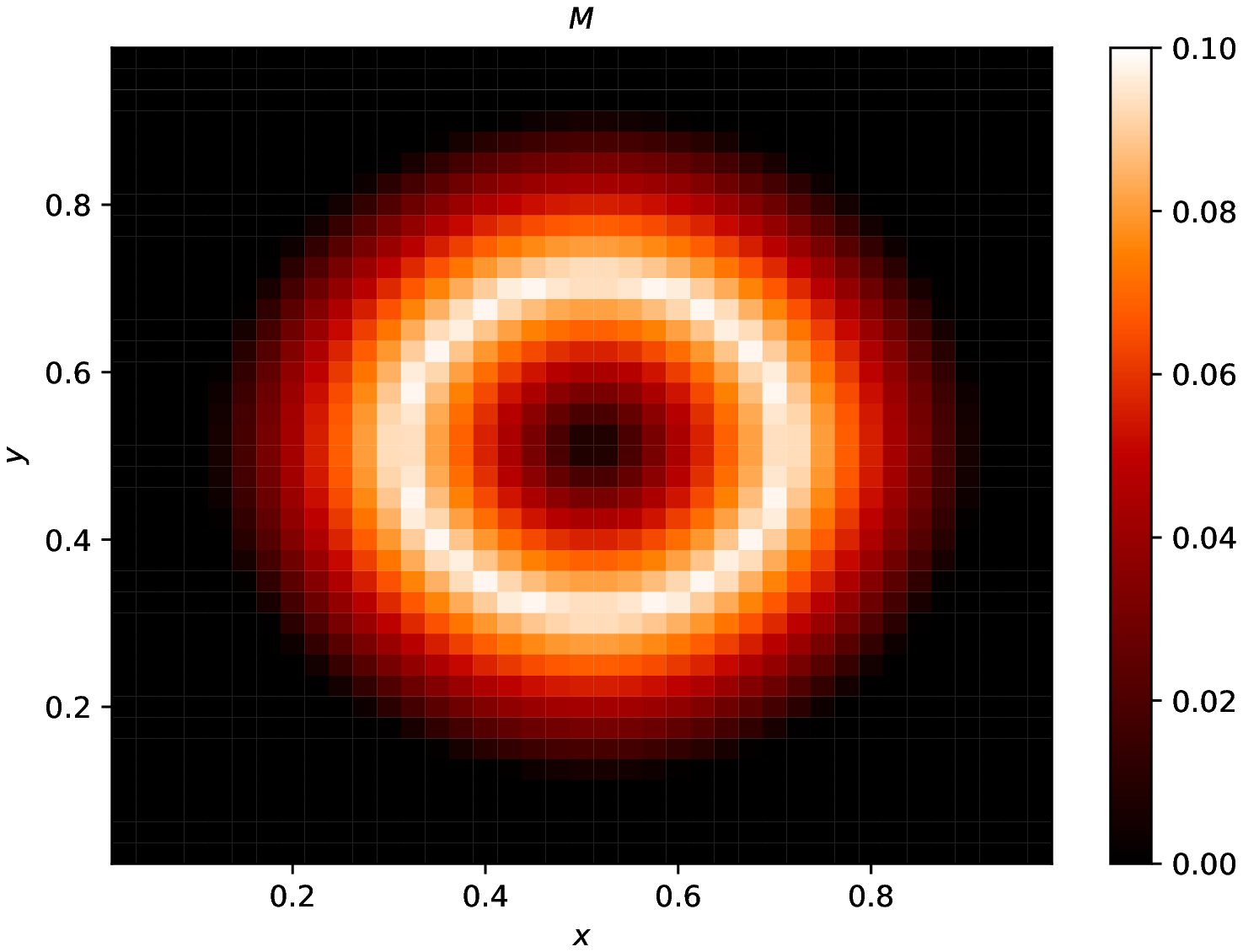}
	\includegraphics[scale=0.4]{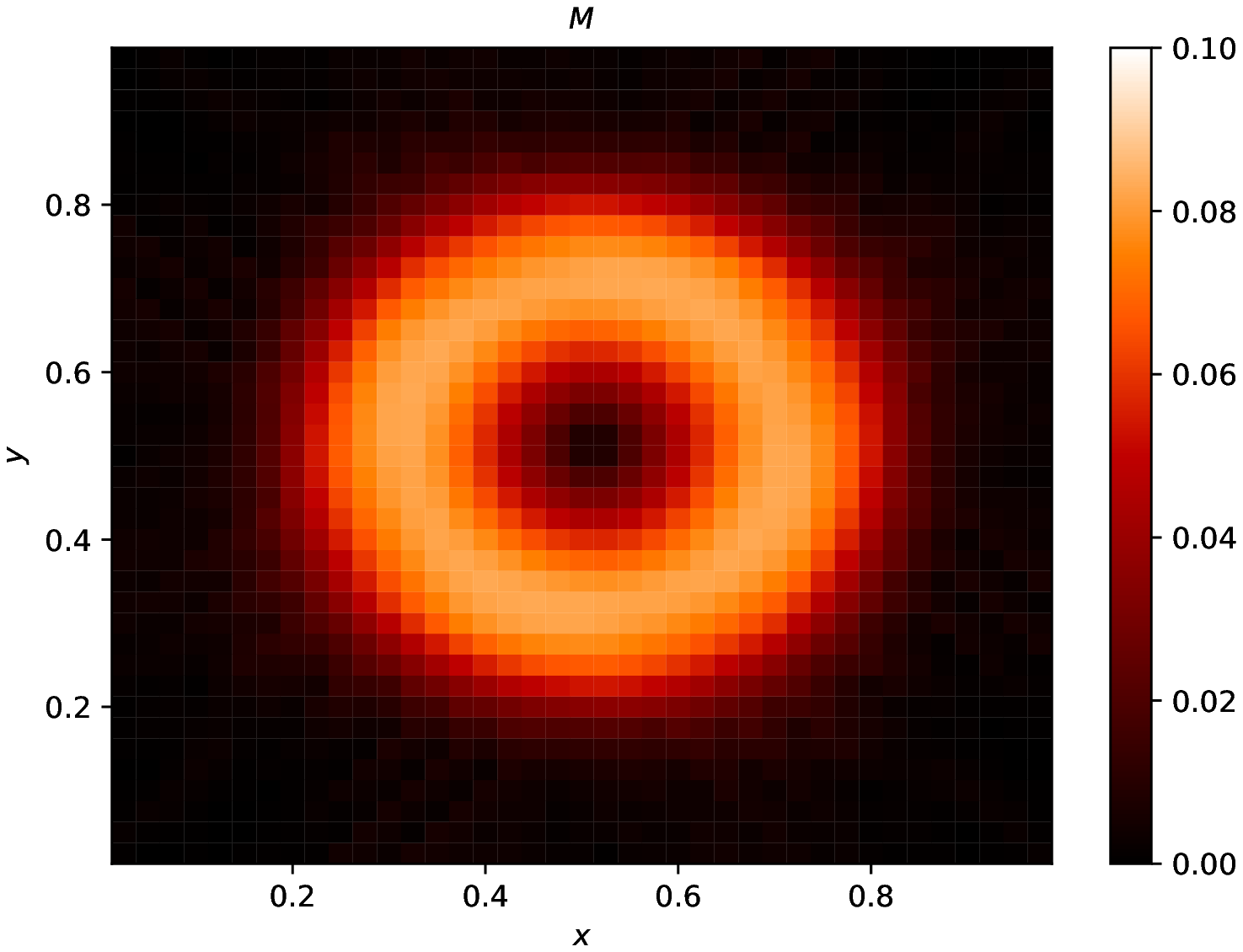}\\
	\includegraphics[scale=0.4]{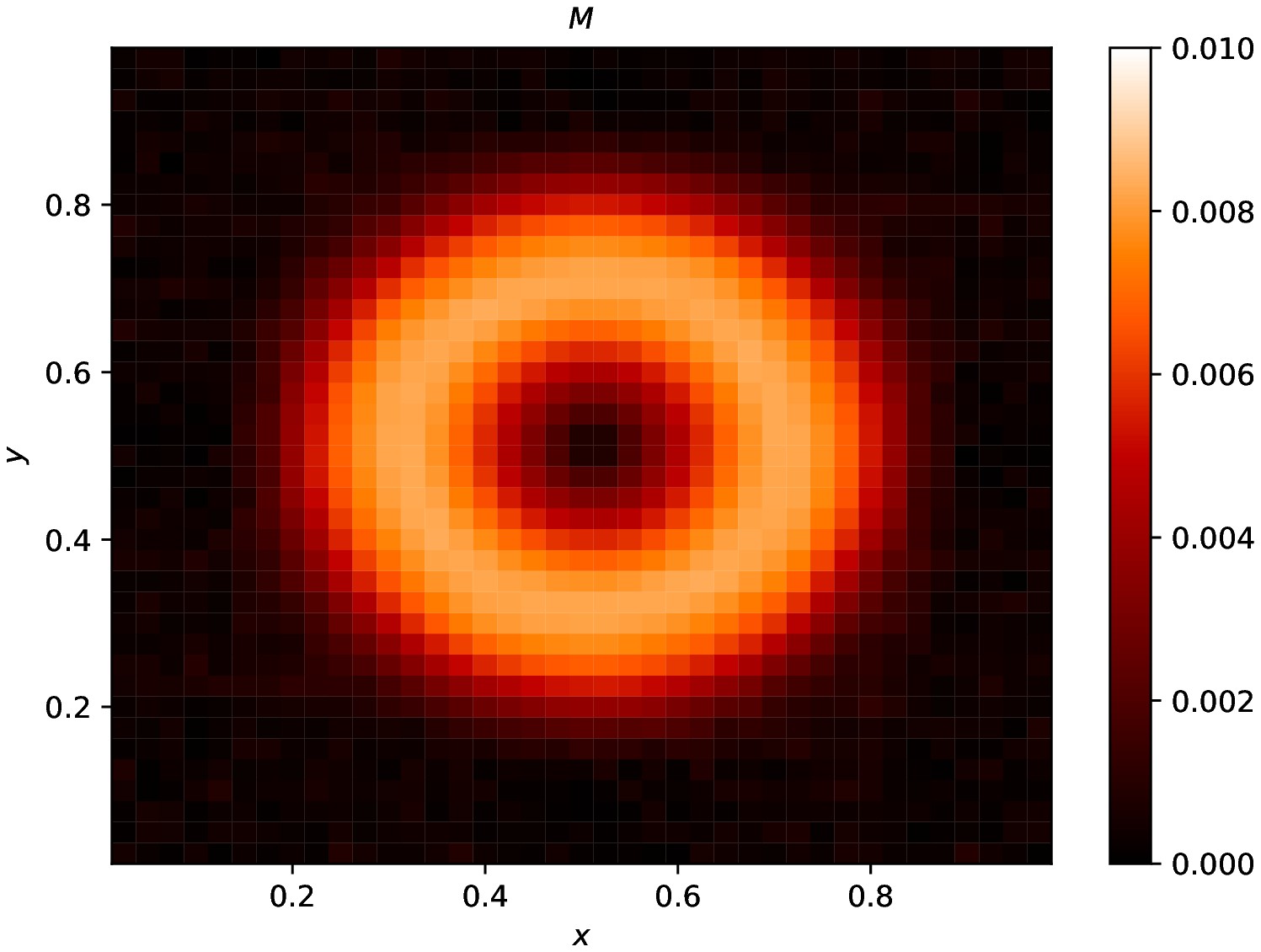}
	\includegraphics[scale=0.4]{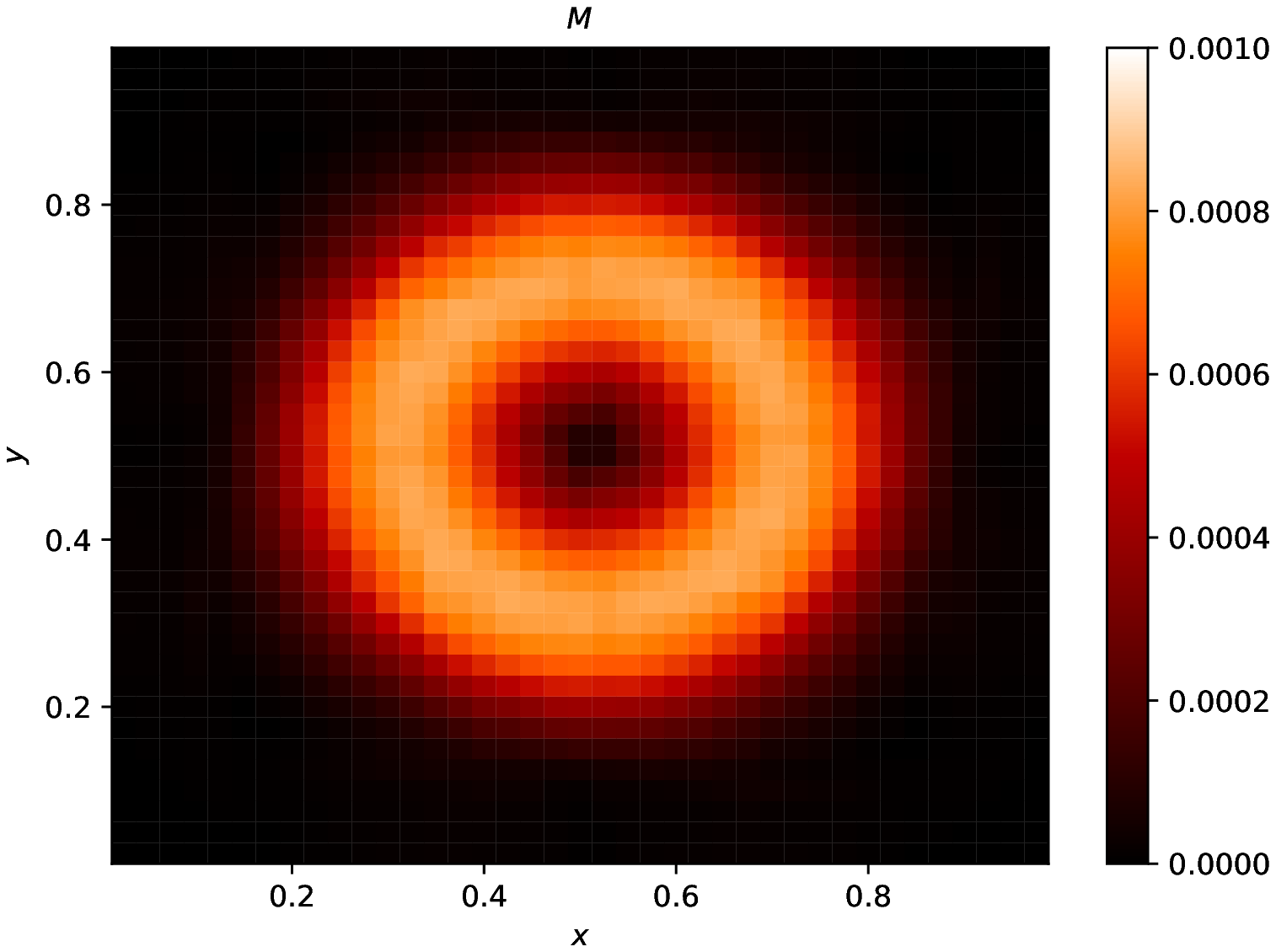}
	\caption{Mach number distribution for different maximal Mach numbers. Top left: Initial state for $M=10^{-1}$. Top right: $M=10^{-1}$, bottom left: $M=10^{-2}$, bottom right: $M=10^{-3}$ at $t=1$.}
	\label{fig:Gresho}
\end{figure}

\begin{figure}[htpb]
	\centering
	\includegraphics[scale=0.5]{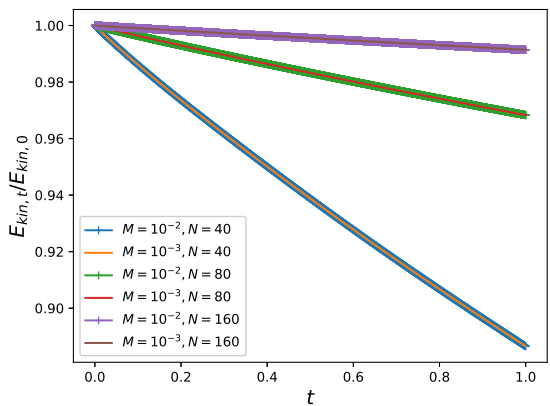}
	\caption{Loss of kinetic energy for different grids and Mach numbers after one full turn of the vortex (non-dimensional).}
	\label{fig:Ekin}
\end{figure}

\subsection{A smooth Gresho Vortex test}
In this section we want to derive a test case to numerically validate the second order accuracy of the second order extension of Section \ref{sec:Second}. 
Unfortunately the Gresho vortex is not a smooth enough solution to test second order accuracy, since its velocity profile is only continuous but not continuously differentiable. Therefore we propose a smoothed velocity profile with which we calculate a pressure profile to gain a stationary vortex. 
A twice continuously differentiable angular velocity in $\frac{m}{s}$ with $u_{\Phi,\max} =1$ and $u_\Phi(0) = 0$ and $u_\Phi(0.4) = 0$ is given by
	\begin{align}
u_{\Phi} = 
\begin{cases}
75 r^2 - 250 r^3, &\text{ for } 0 \leq r < 0.2 \\
-4 + 60 r - 225 r^2 + 250 r^3, &\text{ for } 0.2 \leq r < 0.4 \\
0 &\text{ for } 0.4 \geq r
\end{cases}
\end{align}
with the radius $r = \sqrt{(x - 0.5)^2 + (y - 0.5)^2}$.
The profile can be easily modified to be a $C^k$ function, where $k\in \mathbb{N}$ denotes the degree of continuous differentiability. 
Under the condition that the centrifugal forces are balanced that is $\partial_r p = \frac{u_\Phi(r)^2}{r}$, we can calculate the pressure in $\frac{kg}{m s^2}$ as
\begin{align}
p = 
\begin{cases}
p_0 + 1406.25 r^4 - 7500 r^5 + (10416 + \frac{2}{3}) r^6 &\text{ for } 0 \leq r < 0.2 \\
p_0 + p_2(r) &\text{ for }  0.2 \leq r < 0.4 \\
p_0 + p_2(0.4) &\text{ for } 0.4 \geq r
\end{cases}
\end{align}
where 
\begin{align}
p_2(r) =& 65.8843399322788 - 480r  + 2700 r^2 - (9666 + \frac{2}{3}) r^3 \\
&+ 20156.25 r^4 - 22500 r^5 + (10416 + \frac{2}{3}) r^6 + 16 \ln(r).
\end{align}
As in the Gresho test case, the background pressure is scaled with the Mach number as
\begin{equation}
p_0 = \frac{\rho_0 u_{\Phi,\max}^2}{\gamma M^2} \frac{kg}{m s^2}.
\end{equation}
To transform the dimensional data into non-dimensional quantities, we use the same reference values as in the Gresho vortex test case.

To show the accuracy of the IMEX2 scheme, we compute the solution of the smooth Gresho vortex test on the domain $[0,1]^2$ with periodic boundary conditions. 
In Table \ref{tab:ConvSmoothGresho} the $L^1$-error between the numerical solution at $T=0.05$ and the initial configuration in non-dimensional quantities as well as the convergence rates are displayed for $M = 10^{-1}, 10^{-2},10^{-3}$. 
It can be seen that we reach the expected accuracy independently of the chosen Mach number.
\begin{table}[htpb]
	\begin{center}
		\renewcommand{\arraystretch}{1.2}
		\begin{tabular}{cc  cc cc cc cc} 
			$M$&$N$ & $\rho$ & & $u_1$ & & $u_2$ & & $p$ & \\
			\hline \hline 
			\multirow{4}{*}{$10^{-1}$}& 20& 1.810$ \cdot 10^{-3}$  &---& 1.729$ \cdot 10^{-2}$  &---&  1.729$ \cdot 10^{-2}$  &---& 1.921$ \cdot 10^{-3}$ &---\\
			&40& 3.705$ \cdot 10^{-4}$ &2.288&  5.070$ \cdot 10^{-3}$ &1.770&  5.070$ \cdot 10^{-3}$ &1.770&  3.956$ \cdot 10^{-4}$ &2.279\\ 
			&60& 1.246$ \cdot 10^{-4}$ &2.688&  2.403$ \cdot 10^{-3}$  &1.842& 2.403$ \cdot 10^{-3}$ &1.842&  1.343$ \cdot 10^{-3}$  &2.665\\
			&80& 5.510$ \cdot 10^{-5}$ &2.835&  1.396$ \cdot 10^{-3}$  &1.887& 1.396$ \cdot 10^{-3}$ &1.887&  5.922$ \cdot 10^{-5}$&2.845\\
			\hline
			\multirow{4}{*}{$10^{-2}$}&20 &1.812$ \cdot 10^{-3}$  &---& 1.731$ \cdot 10^{-2}$ &---&  1.731$ \cdot 10^{-2}$  &---& 1.912$ \cdot 10^{-3}$ &---\\
			&40& 3.582$ \cdot 10^{-4}$ &2.339&  5.057$ \cdot 10^{-3}$  &1.775& 5.057$ \cdot 10^{-3}$ &1.775&  3.781$ \cdot 10^{-4}$  &2.337\\
			&60& 1.162$ \cdot 10^{-4}$ &2.777&  2.402$ \cdot 10^{-3}$  &1.837& 2.402$ \cdot 10^{-3}$ &1.837&  1.226$ \cdot 10^{-4}$  &2.777\\ 
			&80& 4.881$ \cdot 10^{-5}$ &3.014&  1.386$ \cdot 10^{-3}$  &1.910& 1.386$ \cdot 10^{-3}$ &1.910&  5.151$ \cdot 10^{-5}$&3.014\\
			\hline
			\multirow{4}{*}{$10^{-3}$}& 20& 1.811$ \cdot 10^{-3}$ &---& 1.731$ \cdot 10^{-2}$ &---&   1.731$ \cdot 10^{-2}$  &---& 1.912$ \cdot 10^{-3}$  &---\\
			&40& 3.580$ \cdot 10^{-4}$ &2.339&  5.057$ \cdot 10^{-3}$  &1.775& 5.057$ \cdot 10^{-3}$&1.775&   3.778$ \cdot 10^{-4}$  &2.339\\
			&60& 1.162$ \cdot 10^{-4}$  &2.775& 2.402$ \cdot 10^{-3}$  &1.836& 2.402$ \cdot 10^{-3}$ &1.836&  1.227$ \cdot 10^{-4}$  &2.775\\ 
			&80& 4.875$ \cdot 10^{-5}$ &3.019&  1.382$ \cdot 10^{-3}$ &1.920&   1.382$ \cdot 10^{-3}$ &1.920&  5.146$ \cdot 10^{-5}$ &3.019\\
			\hline
		\end{tabular}
	\end{center}
	\caption{$L^1$-error and convergence rates for the solution of the smooth Gresho vortex test at $T = 0.05$ (non-dimensional).}
	\label{tab:ConvSmoothGresho}
\end{table}

\section{Conclusion and future developments} 
We have proposed an all-speed IMEX scheme for the Euler equations of gas dynamics which is based on Suliciu relaxation model.
The proposed IMEX scheme is an improvement of explicit schemes since the time step is restricted by the material and not the acoustic wave speeds. 
It is also an improvement of implicit schemes since the implicit part consists only of one linear equation and can be solved very efficiently. 
The scheme has the correct numerical viscosity for all Mach numbers as shown by the Gresho vortex test case, can capture the correct shock positions as shown by the SOD shock test case as well as for Mach number dependent shock problems. 
In addition it is positivity preserving and shows the expected second order convergence rates.

In a future work, the scheme will be extended to the Euler equations with a gravitational source term. 
The aim is to design a multi-dimensional well-balanced scheme, that inherits all the properties of the IMEX scheme presented here.

\section*{Aknowledgements}
	G. Puppo ackwowledges the support by the GNCS-INDAM 2019 research project and A. Thomann the support of the INDAM-DP-COFUND-2015, grant number 713485.
Some material of this work has been improved during the SHARK-FV conference (Sharing Higher-order Advanced Know-how on Finite
Volume http://www.SHARK-FV.eu/) held in 2019.
\bibliographystyle{unsrt}
\bibliography{lit.bib}

\begin{thebibliography}{10}

\bibitem{Dellacherie2010}
St{\'e}phane Dellacherie.
\newblock Analysis of {G}odunov type schemes applied to the compressible
  {E}uler system at low {M}ach number.
\newblock {\em Journal of Computational Physics}, 229(4):978--1016, 2010.

\bibitem{MiczekRoepkeEdelmann2015}
Fabian Miczek, Friedrich~K R{\"o}pke, and Philip~VF Edelmann.
\newblock New numerical solver for flows at various {M}ach numbers.
\newblock {\em Astronomy \& Astrophysics}, 576:A50, 2015.

\bibitem{AbbateIolloPuppo2019}
Emanuela Abbate, Angelo Iollo, and Gabriella Puppo.
\newblock An implicit scheme for moving walls and multi-material interfaces in
  weakly compressible materials.
\newblock to appear on CiCP, 2019.

\bibitem{klainermanMajda1982}
Sergiu Klainerman and Andrew Majda.
\newblock Compressible and incompressible fluids.
\newblock {\em Communications on Pure and Applied Mathematics}, 35(5):629--651,
  1982.

\bibitem{Schochet2005}
Steven Schochet.
\newblock The mathematical theory of low {M}ach number flows.
\newblock {\em ESAIM: Mathematical Modelling and Numerical Analysis},
  39(3):441–458, 2005.

\bibitem{FeireislKlingenbergMarkfelder2019}
E.~Feireisl, C.~Klingenberg, and S.~Markfelder.
\newblock On the low {M}ach number limit for the compressible {E}uler system.
\newblock {\em SIAM Journal on Mathematical Analysis}, 51(2):1496--1513, 2019.

\bibitem{Roe1981}
Philip~L Roe.
\newblock Approximate {R}iemann solvers, parameter vectors, and difference
  schemes.
\newblock {\em Journal of computational physics}, 43(2):357--372, 1981.

\bibitem{GuillardViozat1999}
Herv{\'e} Guillard and C{\'e}cile Viozat.
\newblock On the behaviour of upwind schemes in the low {M}ach number limit.
\newblock {\em Computers \& fluids}, 28(1):63--86, 1999.

\bibitem{Turkel1987}
Eli Turkel.
\newblock Preconditioned methods for solving the incompressible and low speed
  compressible equations.
\newblock {\em Journal of Computational Physics}, 72(2):277 -- 298, 1987.

\bibitem{CordierDegondKumbaro2012}
Floraine Cordier, Pierre Degond, and Anela Kumbaro.
\newblock An asymptotic-preserving all-speed scheme for the {E}uler and
  {N}avier--{S}tokes equations.
\newblock {\em Journal of Computational Physics}, 231(17):5685--5704, 2012.

\bibitem{DimarcoLoubereVignal2017}
G.~Dimarco, R.~Loubère, and M.~Vignal.
\newblock Study of a new asymptotic preserving scheme for the {E}uler system in
  the low {M}ach number limit.
\newblock {\em SIAM Journal on Scientific Computing}, 39(5):A2099--A2128, 2017.

\bibitem{BoscarinoRussoScandurra2018}
S~Boscarino, G~Russo, and L~Scandurra.
\newblock All {M}ach number second order semi-implicit scheme for the {E}uler
  equations of gas dynamics.
\newblock {\em Journal of Scientific Computing}, 77(2):850--884, 2018.

\bibitem{Klein1995}
R.~Klein.
\newblock Semi-implicit extension of a {G}odunov-type scheme based on low
  {M}ach number asymptotics {I}: One-dimensional flow.
\newblock {\em Journal of Computational Physics}, 121(2):213 -- 237, 1995.

\bibitem{NoelleEtAl2014}
Sebastian Noelle, Georgij Bispen, Koottungal~Revi Arun, Maria
  Lukáčová-Medviďová, and C-D Munz.
\newblock A weakly asymptotic preserving low mach number scheme for the {E}uler
  equations of gas dynamics.
\newblock {\em SIAM Journal on Scientific Computing}, 36(6):B989--B1024, 2014.

\bibitem{JinXin1995}
Shi Jin and Zhouping Xin.
\newblock The relaxation schemes for systems of conservation laws in arbitrary
  space dimensions.
\newblock {\em Communications on pure and applied mathematics}, 48(3):235--276,
  1995.

\bibitem{suliciu1990}
I~Suliciu.
\newblock On modelling phase transitions by means of rate-type constitutive
  equations. {S}hock wave structure.
\newblock {\em International Journal of Engineering Science}, 28(8):829--841,
  1990.

\bibitem{ChenLevermoreLiu1994}
Gui-Qiang Chen, C~David Levermore, and Tai-Ping Liu.
\newblock Hyperbolic conservation laws with stiff relaxation terms and entropy.
\newblock {\em Communications on Pure and Applied Mathematics}, 47(6):787--830,
  1994.

\bibitem{Liu1987}
Tai-Ping Liu.
\newblock Hyperbolic conservation laws with relaxation.
\newblock {\em Communications in Mathematical Physics}, 108(1):153--175, Mar
  1987.

\bibitem{Natalini1996}
Roberto Natalini.
\newblock Convergence to equilibrium for the relaxation approximations of
  conservation laws.
\newblock {\em Communications on Pure and Applied Mathematics}, 49(8):795--823,
  1996.

\bibitem{AbbateIolloPuppo2017}
Emanuela Abbate, Angelo Iollo, and Gabriella Puppo.
\newblock An all-speed relaxation scheme for gases and compressible materials.
\newblock {\em Journal of Computational Physics}, 351:1--24, 2017.

\bibitem{BerthonKlingenbergZenk2018}
Christophe Berthon, Christian Klingenberg, and Markus Zenk.
\newblock An all {M}ach number relaxation upwind scheme.
\newblock submitted,
  Preprint,https://ifm.mathematik.uni-wuerzburg.de/\~{}klingen/Publications$\_$files/BerthonKlingenZenk2018.pdf,
  2018.

\bibitem{DimarcoEtAl2018}
Giacomo Dimarco, Rapha{\"e}l Loub{\`e}re, Victor Michel-Dansac, and
  Marie-H{\'e}l{\`e}ne Vignal.
\newblock Second-order implicit-explicit total variation diminishing schemes
  for the {E}uler system in the low {M}ach regime.
\newblock {\em Journal of Computational Physics}, 372:178--201, 2018.

\bibitem{Shu1998}
Chi-Wang Shu.
\newblock Essentially non-oscillatory and weighted essentially non-oscillatory
  schemes for hyperbolic conservation laws.
\newblock In {\em Advanced numerical approximation of nonlinear hyperbolic
  equations}, pages 325--432. Springer, 1998.

\bibitem{CraveroPuppoSempliceVisconti2018}
Isabella Cravero, Gabriella Puppo, Matteo Semplice, and Giuseppe Visconti.
\newblock {CWENO}: uniformly accurate reconstructions for balance laws.
\newblock {\em Mathematics of Computation}, 87(312):1689--1719, 2018.

\bibitem{Berthon2005}
Christophe Berthon.
\newblock Stability of the {MUSCL} schemes for the {E}uler equations.
\newblock {\em Communications in Mathematical Sciences}, 3(2):133--157, 2005.

\bibitem{Bouchut2004}
Fran\c{c}ois Bouchut.
\newblock {\em Nonlinear stability of finite volume methods for hyperbolic
  conservation laws and well-balanced schemes for sources}.
\newblock Frontiers in Mathematics. Birkh\"auser Verlag, Basel, 2004.

\bibitem{CoquelPerthame1998}
F.~Coquel and B.~Perthame.
\newblock Relaxation of energy and approximate {R}iemann solvers for general
  pressure laws in fluid dynamics.
\newblock {\em SIAM Journal on Numerical Analysis}, 35(6):2223--2249, 1998.

\bibitem{KleinEtAl2001}
Rupert Klein, Nicola Botta, Thomas Schneider, Claus-Dieter Munz, Sabine Roller,
  Andreas Meister, L~Hoffmann, and Thomas Sonar.
\newblock Asymptotic adaptive methods for multi-scale problems in fluid
  mechanics.
\newblock {\em Journal of Engineering Mathematics}, 39(1):261--343, 2001.

\bibitem{PareschiRusso2005}
Lorenzo Pareschi and Giovanni Russo.
\newblock Implicit--explicit {R}unge--{K}utta schemes and applications to
  hyperbolic systems with relaxation.
\newblock {\em Journal of Scientific computing}, 25(1):129--155, 2005.

\bibitem{HartenLaxVanLeer1983}
Amiram Harten, Peter~D. Lax, and Bram Van~Leer.
\newblock {On upstream differencing and {G}odunov-type schemes for hyperbolic
  conservation laws}.
\newblock {\em SIAM Review}, 25:35--61, 1983.

\bibitem{ThomannZenkKlingenberg2019}
Andrea Thomann, Markus Zenk, and Christian Klingenberg.
\newblock A second-order positivity-preserving well-balanced finite volume
  scheme for {E}uler equations with gravity for arbitrary hydrostatic
  equilibria.
\newblock {\em International Journal for Numerical Methods in Fluids},
  89(11):465--482, 2019.

\bibitem{Sod1978}
Gary~A Sod.
\newblock A survey of several finite difference methods for systems of
  nonlinear hyperbolic conservation laws.
\newblock {\em Journal of computational physics}, 27(1):1--31, 1978.

\end{thebibliography}
\end{document}